\numberwithin{equation}{section}
\newenvironment{proof}{{\noindent\textbf{\textit {Proof.}}}}{\hfill $\blacksquare$\par}
\newtheorem{theorem}{Theorem}[section]
\newtheorem{proposition}[theorem]{\rm\bfseries Proposition}
\newtheorem{lemma}[theorem]{Lemma}
\newtheorem{corollary}[theorem]{\rm\bfseries Corollary}
\newtheorem{conjecture}[theorem]{Conjecture}
\newtheorem{problem}[theorem]{Problem}
\newtheorem{definition}[theorem]{Definition}
\def\NAT@def@citea{\def\@citea{\NAT@separator}}% Suppress spaces between citations using natbib.sty
\begin{document}
\vspace*{10mm}

\noindent
{\Large \bf The existence of a spanning tree with leaf distance at least $d$ and leaf degree at most $k$ via the size or the spectral radius with respect to the minimum degree}

\vspace*{7mm}

\noindent
{\large \bf  Jifu Lin, Lihua You$^*$}
\noindent

\vspace{7mm}

\noindent
School of Mathematical Sciences, South China Normal University,  Guangzhou, 510631, P. R. China,
e-mail: {\tt 2023021893@m.scnu.edu.cn}, {\tt ylhua@scnu.edu.cn},\\[2mm]
$^*$ Corresponding author
\noindent

%\footnotesize $^1${\it School of Mathematical Sciences, South China Normal University, Guangzhou, 510631, P. R. China}\\
%\footnotesize $^2${\it Department of Mathematics Teaching, Guangzhou Civil Aviation College, Guangzhou, 510403, P. R. China}\\
%\noindent
% $^2${\it Department of Mathematics Teaching, Guangzhou Civil Aviation College, Guangzhou, 510403, P. R. China\/} \\
\vspace{7mm}

\noindent
{\bf Abstract} \
\noindent
Let $k$, $d$ be a positive integer, $G$ be a connected graph of order $n$, $T$ be a tree. The leaf distance of a tree is defined as the minimum distance between any two leaves. For $v\in V(T)$, the leaf degree of $v$ in $T$ is the number of leaves adjacent to $v$, and the leaf degree of $T$ is defined as maximum leaf degree among the vertices of $T$. In this paper, motivated by the conjecture proposed by Kaneko (2001) and its subsequent partial confirmation by Erbes, Molla, Mousley and Santana (2017), we obtain lower bounds in terms of the size and the adjacent spectral radius to guarantee that $G$ contains a spanning tree with leaf distance at least $d$, where $4\leq d \leq n-1$. Furthermore, we obtain some tight conditions in $G$ for its size and spectral radius to ensure that $G$ has a spanning tree with leaf degree at most $k$, which improves the result of Ao, Liu, Yuan, Ng and Cheng (2023).
 \\[2mm]
%\vspace{5mm}

\noindent
{\bf Keywords:} Leaf distance; Leaf degree; Spanning tree; Spectral radius; Size

\baselineskip=0.30in

\section{Introduction}

\hspace{1.5em}Throughout this paper, we only consider simple, undirected and connected graphs. Let $G=(V,E)$ be a graph with vertex set $V(G)=\{v_1,v_2,\cdots,v_n\}$ and edge set $E(G)$, where $n$ and $|E(G)|$ are called the \emph{order} and the \emph{size} of $G$, respectively. For $S\subseteq V(G)$, let $N_G(S)=\bigcup_{v\in S}N_G(v)$, where $N_G(v)=\{u\in V(G):uv\in E(G)\}$. For $v\in V(G)$, the degree of $v$ in $G$, denoted by $d_G(v)$, is the number of vertices adjacent to $v$ in $G$. The minimum degree of $G$ is denoted by $\delta(G)$. A \emph{leaf} is a vertex with degree one in a tree. For a subset $S \subseteq V(G)$, we denote by $G[S]$ the subgraph of $G$ induced by $S$, and by $G-S$ the subgraph obtained from $G$ by removing the vertices in $S$ and their incident edges.

Let $G_1$ and $G_2$ be vertex-disjoint graphs. The \emph{union} $G_1 \cup G_2$ is a graph  with vertex set $V(G_1)\cup V(G_2)$ and edge set $E(G_1)\cup E(G_2)$. For any integer $t$, $tG$ denotes the union of $t$ disjoint copies of graph $G$. The \emph{join} $G_1 \vee G_2$ is formed by adding an edge between every vertex of $G_1$ and every vertex of $G_2$. For undefined terms, see \cite{JAB}.

Let $G$ be a graph with $n$ vertices. The \emph{adjacency matrix} $A(G)=(a_{ij})$ of $G$ is an $n\times n$ symmetric matrix, where $a_{ij}=1$ if vertices $v_i$ and $v_j$ are adjacent in $G$, and $a_{ij}=0$ otherwise. Let $D(G)$ be the diagonal matrix of vertex degrees in $G$. The matrix $Q(G)=D(G)+A(G)$ is known as the \emph{signless Laplacian matrix} of $G$. The largest eigenvalue of $A(G)$ (resp. $Q(G)$), denoted by $\rho(G)$ (resp. $q(G)$), is referred to as the adjacency spectral radius (resp. signless Laplacian spectral radius) of $G$.

A \emph{spanning tree} $T$ of a connected graph $G$ is a spanning subgraph of $G$ that is a tree, including all vertices of $G$. For an integer $k\geq 2$, a \emph{$k$-tree} is a tree with the maximum degree at most $k$, and a \emph{$k$-ended tree} is a tree with at most $k$ leaves. 

Let $T$ be a tree. For a vertex $v\in V(T)$, the \emph{leaf degree} of $v$ is defined as the number of leaves adjacent to $v$, and the leaf degree of $T$ is the maximum leaf degree among all vertices of $T$. The \emph{leaf distance} of $T$ is defined as the minimum distance between any two leaves in $T$.

%In graph theory, deciding whether a graph has spanning trees of particular types is a fundamental problem. For example, the Hamilton path problem, that is to find a spanning 2-ended tree, is a famous problem. 

In mathematical literature, the study on the existence of spanning trees attracted much attention. For instance, Win \cite{S1} established a link between the presence of spanning $k$-trees in a graph and its toughness, and proposed a Chvátal-Erdős type criterion for ensuring that a $t$-connected graph contains a spanning $k$-ended tree \cite{S2}. Gu and Liu \cite{XA} identified connected graphs with spanning $k$-trees by utilizing Laplacian eigenvalues. Fan, Goryainov, Huang, and Lin \cite{DT} investigated the existence of a spanning $k$-tree in a connected graph through the adjacent spectral radius and the signless Laplacian spectral radius. Kaneko \cite{AK1} introduced the concept of leaf degree in a spanning tree and provided a criterion for a connected graph to have a spanning tree with leaf degree at most $k$. Ao, Liu, and Yuan \cite{GA} derived the tight spectral conditions for the existence of a spanning $k$-ended tree and a spanning tree with leaf degree at most $k$ in a connected graph. Zhou, Sun, and Liu \cite{DQ} determined the $\mathcal{D}$-index and $\mathcal{Q}$-index for spanning trees with leaf degree at most $k$ in connected graphs, and other similar studies have been conducted.

The present paper is organized as follows. In Section 2, we recall some well - known results and prove several lemmas. In Sections 3 and 4, we discuss the existence of spanning trees with leaf distance at least $d$ and leaf degree at most $k$ in graphs, respectively, with respect to the minimum degree.

\section{Preliminaries}\label{sec-pre}

\hspace{1.5em}In this section, we introduce some necessary preliminary lemmas, which are very useful in the following proofs.

\begin{definition}{\rm(\!\!\cite{AE})}\label{d1}
	Let $M$ be a complex matrix of order $n$ described in the following
	block form\begin{equation*}
		M=\begin{pmatrix}
			M_{11}&\cdots&M_{1l}\\
			\vdots&\ddots&\vdots\\
			M_{l1}&\cdots&M_{ll}
		\end{pmatrix}
	\end{equation*}
	where the blocks $M_{ij}$ are $n_i \times n_j$ matrices for any $1\leq i, j \leq l$ and $n= n_1 +\cdots + n_l$.
	For $1 \leq i, j \leq l$, let $b_{ij}$ denote the average row sum of $M_{ij}$, i.e. $b_{ij}$ is the sum of all
	entries in $M_{ij}$ divided by the number of rows. Then $B(M)=(q_{ij})$ (or simply $B$) is
	called the quotient matrix of $M$. If, in addition, for each pair $i, j$, $M_{ij}$ has a constant row
	sum, i.e., $M_{ij}\vec{e}_{n_j}= b_{ij}\vec{e}_{n_i}$, then $B$ is called the equitable quotient matrix of $M$, where
	$\vec{e}_k=(1, 1,\cdots, 1)^T \in \mathcal{C}^k$, and $\mathcal{C}$ denotes the field of complex numbers.
\end{definition}

\begin{lemma}{\rm(\!\!\cite{ylh})}\label{Spectra}
	Let $B$ be the equitable quotient matrix of $M$ as defined in Definition \ref{d1}, and $M$ be a nonnegative matrix. Then $\lambda(B)=\lambda(M),$ where $\lambda(B)$ and $\lambda(M)$ are the spectral radius of $B$ and $M$, respectively.
\end{lemma}

\begin{lemma}{\rm(\!\!\cite{ylh})}\label{rq1}
	Let $G$ be a graph and $F$ be a spanning subgraph of $G$. Then $\rho(F)\leq \rho(G)$ and $q(F)\leq q(G)$. In particular, if $G$ is connected, and $F$ is a proper subgraph of $G$, then $\rho(F)<\rho(G)$ and $q(F)<q(G)$.
\end{lemma}

\section{The existence of spanning trees with leaf distance at least $d$ with respect to the minimum degree}	

\hspace{1.5em}Kaneko in \cite{AK1} posed the following conjecture on the existence of a spanning tree with larger leaf distance.

\begin{conjecture}{\rm(\!\!\cite{AK1})}\label{c2}
	Let $d\geq 3$ be an integer and $G$ be a connected graph of order $n\geq d+1$. If $i(G-S)<\frac{2|S|}{d-2}$ for any nonempty subset $S\subseteq V(G)$, then $G$ has a spanning tree with leaf distance at least $d$, where $i(G-S)$ is the number of isolated vertices in $G-S$.
\end{conjecture}

It is clear that if the leaf degree of a tree of order $n (\geq 3)$ is one, then its leaf distance is at least three, which implies \cite[Theorem 1]{AK1} proved Conjecture \ref{c2} for $d=3$. The following case $d=4$ was later proven by Kaneko, Kano, and Suzuki in \cite{AK2}. 

\begin{theorem}{\rm(\!\!\cite{AK2})}\label{t2}
	Let $G$ be a connected graph of order $n\geq 5$. If $i(G-S)<|S|$ for any nonempty subset $S \subseteq V(G)$, then $G$ has a spanning tree with leaf distance at least 4.
\end{theorem}

Given a graph $G$, let $\alpha(G)$ denote the independence number of $G$. For $k\leq \alpha(G)$, let $\delta_k(G)$ be the minimum order of the neighborhood of an independent set of order $k$ in a graph $G$. That is, $\delta_k(G)=\min\{|N_G(A)|\mid A$ is an independent set of order $k$ in $G$\}. Erbes, Molla, Mousley, and Santana \cite{CTS} provided the following proposition to translate the condition in Conjecture \ref{c2} into the language of neighborhood unions.

\begin{proposition}{\rm(\!\!\cite{CTS})}\label{p1}
	Let $d$ be an integer such that $d\geq 3$, and let $G$ be a connected graph. Then $i(G-S)<\frac{2|S|}{d-2}$ for all nonempty $S\subseteq V(G)$ if and only if $\delta_k(G)>\frac{k(d-2)}{2}$ for all $k$ satisfying $1\leq k\leq \alpha(G)$.
\end{proposition}

Thus Conjecture \ref{c2} can be rewritten in the following way.

\noindent\textbf{Conjecture \ref{c2}} (Equivalent Form).\emph{ Let $d\geq 3$ be an integer and $G$ be a connected graph of order $n\geq d+1$. If $\delta_k(G)>\frac{k(d-2)}{2}\text{ for all k satisfying } 1\leq k\leq \alpha(G), $ then $G$ has a spanning tree with leaf distance at least d.}	

Erbes, Molla, Mousley, and Santana \cite{CTS} proved the following theorem which implies Corollary \ref{c3}. For large values of $d$, Corollary \ref{c3} is a stronger version of Conjecture \ref{c2}.

\begin{theorem}{\rm(\!\!\cite{CTS})}\label{t5}
	Let $G$ be a connected graph of order $n$ and $d$ be an integer such that $n>d\geq 4$ and $\alpha(G)\leq 5$. If $\delta_{2k}(G)>k(d-2) \text{ for all k satisfying }1\leq k\leq \frac{\alpha(G)}{2},$ then $G$ has a spanning tree with leaf distance at least $d$.
\end{theorem}

By Proposition \ref{p1} and Theorem \ref{t5}, Conjecture \ref{c2} is true when $\alpha(G)\leq5$, which can be stated as follows. 

\begin{proposition}{\rm(\!\!\cite{CTS})}\label{p3}
	Let $d\geq 4$ be an integer and $G$ be a connected graph of order $n\geq d+1$ with $\alpha(G)\leq 5$. If $i(G-S)<\frac{2|S|}{d-2}$ for any nonempty subset $S\subseteq V(G)$, then $G$ has a spanning tree with leaf distance at least $d$.	
\end{proposition}

By Theorem \ref{t5}, Corollary \ref{c3} was obtained immediately, which showed Conjecture \ref{c2} is true when $d\geq \frac n3$.

\begin{corollary}{\rm(\!\!\cite{CTS})}\label{c3}
	Let $G$ be a connected graph of order $n$ and $d$ be an integer such that $n>d\geq \frac n3$. If $\delta_{2k}(G)>k(d-2)$ for all $k$ satisfying $1\leq k\leq \frac{\alpha(G)}{2}$, then $G$ has a spanning tree with leaf distance at least $d$.
\end{corollary}

Lin, You \cite{JL} and Chen, Lv, Li, Xu \cite{LD} studied some sufficient conditions to ensure that a graph $G$ with minimum degree $\delta(G)$ has a spanning tree with leaf distance at least four by using Theorem \ref{t2}, and they also in \cite{JL} showed the following problem.

\begin{problem}{\rm(\!\!\cite{JL})}\label{p2}
	For a connected graph $G$ of order $n$ and $5\leq d\leq n-1$, find some spectral radius conditions of $G$ to determine whether $G$ has a spanning tree with leaf distance at least $d$. 
\end{problem}

Motivated by \cite{JL}, in this paper, we study the sufficient conditions to ensure that a graph $G$ with the minimum degree $\delta(G)$ has a spanning tree with leaf distance at least $d$ (where $4\leq d\leq n-1$), and obtain the following theorems by using Theorem \ref{t5} and Corollary \ref{c3}. For any positive integer $m$ and $d\geq 4$, we define a function $\phi(m)=\lceil{\frac{2m}{d-2}}\rceil$.

\begin{theorem}\label{tt1}
	Let $G$ be a connected graph with order $n>d\geq 4$, $\delta(G)\geq t$, $d-2\mid 2t$ and $\alpha(G)\leq 5$, where $t$ and $d$ are positive integers. If $$|E(G)|>\begin{cases}
		\binom{n-\phi(t)}{2}+t\phi(t)+4t-3,& \text{if  $n=6$,}\\
		\binom{n-\phi(t)}{2}+t\phi(t)+4t-4,& \text{if  $n=5$ or $n\geq7$,}
	\end{cases}
	$$then $G$ has a spanning tree with leaf distance at least $d$.
\end{theorem}

\begin{theorem}\label{tt2}
	Let $G$ be a connected graph with order $n\geq 3d\geq 12$ {\rm{(i.e. $4\leq d\leq \frac{n}{3}$)}},  $\delta(G)\geq t$ and $\alpha(G)\leq 5$, where $t$ and $d$ are positive integers. If $$|E(G)|>\begin{cases}
		\binom{n-2}{2}+2(d-2),& \text{if $t\leq d-2$,}\\
		\binom{n-4}{2}+8(d-2),& \text{if $t>d-2$,}
	\end{cases}
	$$ then $G$ has a spanning tree with leaf distance at least $d$. 
\end{theorem}

\begin{theorem}\label{tt3}
	Let $G$ be a connected graph of order $n\geq d+1\geq6$. If $$|E(G)|>\begin{cases}
		\binom{n-6}{2}+6(n-6),& \text{if $\frac n3\leq d\leq \frac{n-3}{2}$,}\\
		\binom{n-4}{2}+8(d-2),& \text{if $\frac{n-2}{2}\leq d\leq \frac{n-1}{2},$}\\
		\binom{n-4}{2}+4(n-4),& \text{if $\frac{n}{2}\leq d\leq n-3$,}\\
		\binom{n-2}{2}+2(d-2),& \text{if $n-2\leq d\leq n-1$,}
	\end{cases}
	$$ then $G$ has a spanning tree with leaf distance at least $d$.
\end{theorem}

\begin{theorem}\label{tt4}
	Let $G$ be a connected graph with order $n\geq 6t+5$ and $\alpha(G)\leq5$, $t$, $d$ be positive integers with $t\leq \delta(G)$, $4\leq d\leq n-1$ and $d-2\mid 2t$. If $\rho(G)\geq\rho(K_t\vee (K_{n-t-\phi(t)}\cup\phi(t)K_1))$, then $G$ has a spanning tree with leaf distance at least $d$.
\end{theorem}

\begin{theorem}\label{tt5}
	Let $G$ be a connected graph with order $n\geq3d\geq 12$ {\rm{(i.e. $4\leq d\leq \frac{n}{3}$)}}, $\delta(G)\geq t$ and $\alpha(G)\leq 5$, where $t$ and $d$ are positive integers. If $$\rho(G)\geq\begin{cases}
		\rho(K_{d-2}\vee(K_{n-d}\cup2K_1)),& \text{if $t\leq d-2$,}\\
		\rho(K_{2(d-2)}\vee(K_{n-2d}\cup4K_1)),& \text{if $t>d-2$,}
	\end{cases}
	$$then $G$ has a spanning tree with leaf distance at least $d$. 
\end{theorem}

\begin{theorem}\label{tt6}
	Let $G$ be a connected graph of order $n\geq 15$. If \begin{equation}\label{th 1.13}
		\rho(G)\geq\begin{cases}
			\rho(K_{2(d-2)}\vee(K_{n-2d}\cup4K_1)),& \text{if $\frac{n-2}{2}\leq d\leq \frac{n-1}{2},$}\\
			\rho(K_{n-4}\vee4K_1),& \text{if $\frac{n}{2}\leq d\leq n-3$,}\\
			\rho(K_{d-2}\vee(K_{n-d}\cup2K_1)),& \text{if $n-2\leq d\leq n-1$,}
		\end{cases}
	\end{equation}
	then $G$ has a spanning tree with leaf distance at least $d$. 
\end{theorem}

The proofs of Theorems \ref{tt1} - \ref{tt6} will be provided as follows. Firstly, we provide the following lemmas.

\begin{lemma}\label {rho}
	Let $n$, $l$, $d$ be positive integers with $n>ld$, $H_l=K_{l(d-2)}\vee(K_{n-ld}\cup2lK_1)$, $f_l(x)=x^{3}-\left(n-2 l-2\right) x^{2}-\left(2 d l^{2}-4 l^{2}-2 l+n-1\right) x-2 d^{2} l^{3}+4 d l^{3}+2 d l^{2} n-2 d l^{2}-4 l^{2} n+4 l^{2}.$ Then $\rho(H_l)$ is the largest root of $f_l(x)=0$. In particular, we have 
	
	\noindent{\rm (i)} $n-3<\rho(H_1)<n+\frac{2d}{n}-3$, where $4\leq d\leq n-1$.
	
	\noindent{\rm (ii)} $\rho(H_2)>n+\frac dn-3$, where $n\geq15$ and $\frac{n-2}{2}\leq d\leq n-1$.	
\end{lemma}

\begin{proof}
	We consider the equitable partition $V(H_l)=V(K_{l(d-2)})\cup V(K_{n-ld})\cup V(2lK_1)$. The corresponding quotient matrix of $A(H_l)$ can be written as $$B_l=\begin{pmatrix}
		l(d-2)-1& n-ld& 2l\\
		l(d-2)& n-ld-1& 0\\
		l(d-2)& 0& 0\\
	\end{pmatrix},$$ and the characteristic polynomial of the matrix $B_l$ is $f_l(x)$. In light of Lemma \ref{Spectra}, $\rho_l=\rho(H_l)$ is the largest root of $f_l(x)=0$. 
	
	By direct computation, we obtain $f_1(n-3)=-2 \left(d-2\right)^{2}<0$, which implies $\rho_1>n-3$. By $f_1(n+\frac{2d}{n}-3)=\frac{8d^3}{n^3}+(\frac{4}{n}-\frac{20}{n^2}-2)d^2+(2n-2+\frac{20}{n})d-8$, we take $q_1(x)=\frac{8x^3}{n^3}+(\frac{4}{n}-\frac{20}{n^2}-2)x^2+(2n-2+\frac{20}{n})x-8$. Then $q_1'(x)=\frac{24}{n^3}x^2+(\frac{8}{n}-\frac{40}{n^2}-4)x+2n-2+\frac{20}{n}$. Since $q_1'(n-1)=10+\frac{24}{n^3}-\frac{8}{n^2}-\frac 4n-2n<0$, $q_1(d)\geq \min\{q_1(4),q_1(n-1)\}$ by $4\leq d \leq n-1$. Clearly, $q_1(n-1)=4n-8-\frac{8}{n^3}+\frac{4}{n^2}>0$ and $q_1(4)=8n-48+\frac{512}{n^3}-\frac{320}{n^2}+\frac{144}{n}>0$, and then $f_1(n+\frac{2d}{n}-3)=q_1(d)\geq\min\{q_1(4),q_1(n-1)\}>0.$	When $x\geq n-3$, $f_1'(x)=3x^2-2(n-4)x+7-n-2d\geq f_1'(n-3)=n^2-5n-2d+10\geq n^2-5n-2(n-1)+10=n^2-7n+12>0$. Then $f_1(x)$ is increasing in the interval $[n-3,+\infty)$, and $f_1(x)\geq f_1(n+\frac{2d}{n}-3)>0$ when $x\geq n+\frac{2d}{n}-3$. Then $\rho_1<n+\frac{2d}{n}-3$.
	
	By direct computation, we have $f_2(n+\frac dn-3)=\frac{d^3}{n^3}-(16+\frac{3}{n^2}+\frac{6}{n})d^2+(47+\frac{12}{n}+n)d-20-10n+2n^2.$ Let $q_2(x)=\frac{x^3}{n^3}-(16+\frac{3}{n^2}+\frac{6}{n})x^2+(47+\frac{12}{n}+n)x-20-10n+2n^2$. Then $q_2'(x)=\frac{3}{n^3}x^2-2(16+\frac{3}{n^2}+\frac{6}{n})x+47+\frac{12}{n}+n$. Since $\frac{16+\frac{3}{n^2}+\frac 6n}{\frac{3}{n^3}}=\frac{16n^3+3n+6n^2}{3}>n-1\geq d\geq\frac{n-2}{2}$, $q_2'(x)\leq q_2'(\frac{n-2}{2})=-15n+73+\frac{3}{n^3}+\frac{3}{n^2}+\frac{87}{4n}<0$ when $x\geq\frac{n-2}{2}$. Then $q_2(x)$ is decreasing in the interval $[\frac{n-2}{2},+\infty)$ and thus $q_2(d)\leq q_2(\frac{n-2}{2})=-\frac{3}{2}n^2+27n-\frac{63}{4n}-\frac{3}{2n^2}-\frac{1}{n^3}-\frac{573}{8}\leq-\frac{3}{2}n^2+27n-\frac{573}{8}$. 
	
	Let $q_3(x)=-\frac{3}{2}x^2+27x-\frac{573}{8}$. By $n\geq 15$, $q_3(n)\leq q_3(15)=-\frac{33}{8}<0$. Hence $f_2(n+\frac dn-3)=q_2(d)\leq q_2(\frac{n-2}{2})\leq q_3(n)\leq q_3(15)<0.$ Then $\rho_2>n+\frac dn-3$.
\end{proof}

\begin{lemma}\label {f*}
	Let $n$, $s$, $k$ be positive integers with $n>s+2k$, $G_k=K_s\vee(K_{n-s-2k}\cup2kK_1)$,  $H_2=K_{2(d-2)}\vee(K_{n-2d}\cup4K_1)$, $F_k(x)=x^3-(n-2k-2)x^2+(1-n+2k-2ks)x-4k^2s+2kns-2ks^2-2ks.$ Then $\rho(G_k)$ is the largest root of $F_k(x)=0$. In particular, if $\frac{n-2}{2}\leq d\leq n-1$ and $n\geq 15$, then $F_k(x)$ is increasing in the interval $[\rho(H_2),+\infty)$.	
\end{lemma}

\begin{proof}
	Considering the equitable partition $V(G_k)=V(K_s)\cup V(K_{n-s-2k})\cup V(2kK_1)$, the corresponding quotient matrix of $A(G_k)$ is $$B_k^*=\begin{pmatrix}
		s-1&n-s-2k&2k\\
		s&n-s-2k-1&0\\
		s&0&0\\
	\end{pmatrix},$$ and then the characteristic polynomial of the $B_k^*$ is equal to $F_k(x)$. In view of Lemma \ref{Spectra}, $\rho(G_k)$ is the largest root of $F_k(x)=0$.
	
	Let $\rho_2=\rho(H_2)$. By Lemma \ref{rho}, $\frac{n-2}{2}\leq d\leq n-1$ and $n\geq 15$, we have $\rho(H_2)>n+\frac dn-3>n-3$. Clearly, ${F_k}'(x)=3x^2-2(n-2k-2)x+1-n+2k-2ks$. By $\frac{n-2k-2}{3}<n-3<\rho_2$ and $n\geq2k+s+1\geq 2k+2$, we obtain $
	{F_k}'(x)\geq {F_k}'(\rho_2)> {F_k}'(n-3)=n^{2}+\left(-9+4 k\right) n-2 s k-10 k+16\geq n^{2}+\left(-9+4 k\right) n-2 (n-2k-1) k-10 k+16=4 k^{2}+\left(2 n-8\right) k+n^{2}-9 n+16\geq n^{2}-7 n+12>0$ when $x\geq\rho_2$. Then $F_k(x)$ is increasing in the interval $[\rho_2,+\infty)$.
\end{proof}

	\subsection{Proof of Theorem \ref{tt1}} 
	\hspace{1.5em}Assume the contrary that $G$ contains no spanning tree with leaf distance at least $d$. By Proposition \ref{p3}, there exists a nonempty subset $S\subseteq V(G)$ such that $i(G-S)\geq \frac{2|S|}{d-2}\text{ i.e. }i(G-S)\geq \Big\lceil{\frac{2|S|}{d-2}}\Big\rceil.$
	
	We choose a connected graph $H$ of order $n$ satisfying $V(H)=V(G)$, $\delta(H)\geq t$, $\alpha(H)\leq 5$ and $i(H-S)\geq \Big\lceil{\frac{2|S|}{d-2}}\Big\rceil$ so that its size is as large as possible. Then $|E(G)|\leq |E(H)|$. According to the choice of $H$, we see that the induced subgraph $H[S]$ and each connected component of $H-S$ are complete graphs, and $H=H[S]\vee(H-S)$.
	
	Firstly, we claim that there is at most one non-trivial connected component in $H-S$. Otherwise, we can obtain a new graph $H'$ by adding edges among all non-trivial connected components to derive a bigger non-trivial connected component. Obviously, $H$ is a proper spanning subgraph of $H'$ with $|E(G)|<|E(H)|$, but $H'$ satisfies $V(H')=V(G)$, $\delta(H')\geq t$, $\alpha(H')\leq 5$ and $i(H'-S)\geq \Big\lceil{\frac{2|S|}{d-2}}\Big\rceil$, which is a contradiction with the choice of $H$. 
	
	Let $i(H-S)=i$ and $|S|=s$ for short. Then $i\geq \lceil{\frac{2s}{d-2}}\rceil=\phi(s)$ by $i(H-S)\geq  \Big\lceil{\frac{2|S|}{d-2}}\Big\rceil$. Since $d-2\mid 2t$, we have $\phi(t)=\frac{2t}{d-2}$ and $s\phi(t)=s\cdot\frac{2t}{d-2}=t\cdot\frac{2s}{d-2}\leq t\phi(s)$. Clearly, $\phi(t)\leq\phi(s)$ by $t\leq \delta(G)\leq s$.
	
	We complete the proof by considering the following two cases.
	
	{\noindent\textbf{Case 1.}} $H-S$ has exactly one non-trivial connected component.
	
	In this case, let $F$ be the unique non-trivial connected component of $H-S$ and $V(F)=n_1\geq2$. Now we show $i=\lceil{\frac{2s}{d-2}}\rceil=\phi(s)$. 
	
	If $i\geq \phi(s)+1$, then we take a new graph $H''$ obtained from $H$ by joining each vertex of $F$ with one vertex in $V(H-S)\backslash V(F)$ by an edge. Then we have $$|E(H'')|=|E(H)|+n_1>|E(H)|,\text{ }\alpha(H'')\leq\alpha(H)\leq 5,$$ $$ \delta(H'')\geq\delta(H)\geq t ,\text{ } i(H''-S)=i-1\geq \phi(s),$$ which is a contradiction to the choice of $H$. Hence $i=\phi(s)$ by $i\geq \phi(s)$, and $H=K_s\vee(K_{n-s-i}\cup iK_1)$. Clearly, $n=s+i+n_1\geq s+i+2$, $|E(H)|=\binom{n-\phi(s)}{2}+s\phi(s)$ and $i=\phi(s)$. Let $w=\phi(s)-\phi(t)$. Then we have $$\begin{aligned}
	 &\binom{n-\phi(t)}{2}+t\phi(t)+4t-4-|E(H)|\\=&\frac{(2n-\phi(s)-\phi(t)-1)(\phi(s)-\phi(t))}{2}+t\phi(t)+4t-4-s\phi(s)\\ \geq&\frac{(\phi(s)+2s-\phi(t)+3)(\phi(s)-\phi(t))}{2}+t\phi(t)+4t-4-s\phi(s)\\=&\frac{(\phi(s)-\phi(t)+3)(\phi(s)-\phi(t))}{2}+t\phi(t)-s\phi(t)+4t-4\\\geq&\frac{(\phi(s)-\phi(t)+3)(\phi(s)-\phi(t))}{2}+t\phi(t)-t\phi(s)+4t-4\\=&
	 \frac{w(w+3)}{2}-tw+4t-4.
	\end{aligned}$$
	
	By $H=K_s\vee(K_{n-s-i}\cup iK_1)$ and $\alpha(H)\leq 5$, we have $\phi(s)=\alpha(H)-1\leq 4$, and $w=\phi(s)-\phi(t)\in\{0,1,2,3\}$ by $1\leq\phi(t)\leq\phi(s)$. Let $r_1(w)=\frac{w(w+3)}{2}-tw+4t-4.$ Thus $r_1(w)\geq\min\{r_1(w)|w=0,1,2,3\}\geq0$, which implies $|E(G)|\leq |E(H)|\leq\binom{n-\phi(t)}{2}+t\phi(t)+4t-4,$ a contradiction.
	
	{\noindent\textbf{Case 2.}} $H-S$ has no non-trivial connected component.
	
	In this case, we prove $i\leq \phi(s)+1$ firstly. If $i\geq \phi(s)+2$, then we take a new graph $H'''$ obtained from $H$ by adding an edge between two vertices in $V(H-S)$. Thus $i(H'''-S)=i-2\geq \phi(s)$, $\alpha(H''')\leq\alpha(H)\leq5$, $\delta(H''')\geq\delta(H)\geq t$ and $|E(H''')|=|E(H)|+1$. This contradicts to the choice of $H$, which implies $i=\phi(s)$ or $i=\phi(s)+1$ by $i\geq\phi(s)$.
	
	{\noindent\textbf{Subcase 2.1.}} $i=\phi(s)$.
	
	In this subcase, we have $H=K_s\vee iK_1$, $n=s+i$ and $|E(H)|=si+\binom{s}{2}$, $\alpha(H)=i=\phi(s)$ and $w=\phi(s)-\phi(t)\in\{0,1,2,3,4\}$ by $1\leq\phi(t)\leq\phi(s)=\alpha(H)\leq 5$.
	
	{\noindent\textbf{Subcase 2.1.1.}} $t=1$.
	
		Then $d=4$ by $d-2\mid 2t$ and $d\geq 4$, thus $\phi(t)=t=1$, $\phi(s)=s$ and $n=2s$.
		
		If $n\geq 7$, then $s\geq 4$, and $
		\binom{n-\phi(t)}{2}+t\phi(t)+4t-4-|E(H)|=\frac{(s-4)(s-1)}{2}\geq0,$ a contradiction.
		
		If $n=6$, then $s=3$, and $\binom{n-\phi(t)}{2}+t\phi(t)+4t-3-|E(H)|=1+\frac{(s-4)(s-1)}{2}=0$, a contradiction.
		
		{\noindent\textbf{Subcase 2.1.2.}} $t\geq2$.

	For $n\geq 5$, we have $$\begin{aligned}
	&\binom{n-\phi(t)}{2}+t\phi(t)+4t-4-|E(H)|\\=&\frac{(\phi(s)-\phi(t)-1)(\phi(s)-\phi(t))}{2}+t\phi(t)-s\phi(t)+4t-4\\ \geq&\frac{(\phi(s)-\phi(t)-1)(\phi(s)-\phi(t))}{2}+t\phi(t)-t\phi(s)+4t-4\\=&
	\frac{w(w-1)}{2}-tw+4t-4.
	\end{aligned}$$
	
	Then $r_2(w)\geq\min\{r_2(w)|w=0,1,2,3,4\}>0$, where $r_2(w)= \frac{w(w-1)}{2}-tw+4t-4.$ Thus $|E(G)|\leq|E(H)|<\binom{n-\phi(t)}{2}+t\phi(t)+4t-4$, a contradiction.
	
	{\noindent\textbf{Subcase 2.2.}} $i=\phi(s)+1$.
	
	In this subcase, $n=s+i$ and $H=K_s\vee iK_1$. By $|E(H)|=\binom{s}{2}+si$, we have $$\begin{aligned}
		&\binom{n-\phi(t)}{2}+t\phi(t)+4t-4-|E(H)|\\=&\frac{(\phi(s)-\phi(t))(\phi(s)-\phi(t)+1)}{2}+t\phi(t)-s\phi(t)+4t-4\\ \geq&\frac{(\phi(s)-\phi(t))(\phi(s)-\phi(t)+1)}{2}+t\phi(t)-t\phi(s)+4t-4\\=&
		\frac{w(w+1)}{2}-tw+4t-4.
	\end{aligned}$$
	
	Let $r_3(w)=\frac{w(w+1)}{2}-tw+4t-4$. It is easy to check $r_3(w)\geq0$ for $w\in\{0,1,2,3,4\}$. Hence $|E(G)|\leq|E(H)|\leq\binom{n-\phi(t)}{2}+t\phi(t)+4t-4<\binom{n-\phi(t)}{2}+t\phi(t)+4t-3$, a contradiction. 
	
	Combining the above arguments, $G$ has a spanning tree with leaf distance at least $d$.
	
	{\hfill $\blacksquare$ \par}

	\subsection{Proof of Theorem \ref{tt2}} 
	\hspace{1.5em}Suppose the contrary that $G$ contains no spanning tree with leaf distance at least $d$. By applying Theorem \ref{t5}, there exists $k$ such that $1\leq k\leq\frac{\alpha(G)}{2}<3$ and $\delta_{2k}(G)\leq k(d-2).$
	
	We choose a connected graph $H$ satisfying $V(H)=V(G)$, $\delta({H})\geq t$, $\alpha({H})\leq 5$ and $\delta_{2k}(H)\leq k(d-2)$ so that its size is as large as possible. Then $|E(G)|\leq|E(H)|$. By the definition of $\delta_{2k}(H)$, there is an independent set $I\subseteq V(H)$ of order $2k$ such that $\delta_{2k}(H)=|N_{H}(I)|$. According to the choice of ${H}$, we see that the induced subgraph ${H}[N_{H}(I)]$ and each connected component of ${H}-N_{H}(I)\cup I$ are complete graphs (if $V({H}-N_{H}(I)\cup I)\neq \emptyset$), and ${H}={H}[N_{H}(I)]\vee({H}-N_{H}(I))$.
	
	We claim that there is at most one non-trivial connected component in ${H}-N_{H}(I)$. Otherwise, we can obtain a new graph ${H}'$ by adding edges among all non-trivial connected components to derive a bigger non-trivial connected component. Obviously, ${H}$ is a proper spanning subgraph of ${H}'$ with $|E(H')|>|E(H)|$, but $H'$ satisfies $V(H')=V(G)$, $\delta({H'})\geq \delta (H)\geq t$, $\alpha({H'})\leq \alpha(H)\leq 5$ and $\delta_{2k}(H')=\delta_{2k}(H)\leq k(d-2)$, which is a contradiction with the choice of ${H}$. 
	
	We complete the proof by considering the following two cases.
	
	{\noindent\textbf{Case 1.}} ${H}-N_{H}(I)$ has exactly one non-trivial connected component.
	
	In this case, let $F$ be the unique non-trivial connected component of ${H}-N_{H}(I)$ and $V(F)=n_1\geq2$. Now we show $V({H}-N_{H}(I)-V(F))=I$.
	
	If $I\subsetneq V({H}-N_{H}(I)-V(F))$, then let ${H}''$ be a new graph obtained from ${H}$ by joining each vertex of $F$ with one vertex in $V({H}-N_{H}(I)-V(F))-I$ by an edge. Then we have $|E({H}'')|=|E({H})|+n_1>|E({H})|$, $\delta({H}'')\geq\delta({H})\geq t$, $\alpha({H}'')\leq\alpha({H})\leq5$ and $\delta_{2k}({H}'')=|N_{{H}''}(I)|=|N_{H}(I)|=\delta_{2k}({H})\leq k(d-2)$, which contradicts the choice of ${H}$. Hence $V({H}-N_{H}(I)-V(F))=I$ and ${H}=K_{\delta_{2k}({H})}\vee(K_{n-\delta_{2k}({H})-2k}\cup 2kK_1)$. 
	
	By $\delta_{2k}({H})\leq k(d-2)$, we have $$|E(G)|\leq|E({H})|\leq|E(K_{k(d-2)}\vee(K_{n-kd}\cup2kK_1))|=\binom{n-2k}{2}+2k^2(d-2).$$ 
	
	For $t>d-2$, then $k(d-2)\geq\delta_{2k}({H})\geq\delta({H})\geq t>d-2$. Then $k=2$ by $1\leq k\leq 2$, and $|E(G)|\leq\binom{n-4}{2}+8(d-2)$, a contradiction.
	
	For $t\leq d-2$ and $k=1$, $|E(G)|\leq\binom{n-2}{2}+2(d-2)$, a contradiction. 
	
	For $t\leq d-2$ and $k=2$, we have $
		\binom{n-2}{2}+2(d-2)-|E(G)|\geq\binom{n-2}{2}+2(d-2)-\binom{n-4}{2}-8(d-2)>0$ by $n\geq 3d$,  then $|E(G)|<\binom{n-2}{2}+2(d-2)$, a contradiction.

	{\noindent\textbf{Case 2.}} ${H}-N_{H}(I)$ has no non-trivial connected component.
	
	In this case, we have ${H}=K_{\delta_{2k}({H})}\vee(n-\delta_{2k}({H}))K_1$. We claim $n-\delta_{2k}({H})\leq 2k+1$. Otherwise, if $n-\delta_{2k}({H})\geq 2k+2$, then we can obtain a new graph obtained from ${H}$ by adding an edge in $V({H})-N_{H}(I)-I$, which is a contradiction to the choice of ${H}$. Hence $n-\delta_{2k}({H})\leq 2k+1$. Then $3d\leq n\leq\delta_{2k}({H})+2k+1\leq k(d-2)+2k+1=kd+1\leq 2d+1$, say, $d\leq 1$, a contradiction with $d\geq 4$. 
	
	Combining the above arguments, $G$ has a spanning tree with leaf distance at least $d$.
	
	{\hfill $\blacksquare$ \par}
	
	\subsection{Proof of Theorem \ref{tt3}} 
	\hspace{1.5em}Assume the contrary that $G$ contains no spanning tree with leaf distance at least $d$. By Corollary \ref{c3}, there exists $k$ such that $1\leq k\leq\frac{\alpha(G)}{2}$ and $\delta_{2k}(G)\leq k(d-2).$
	
	We choose a connected graph $H$ of order $n$ satisfying $V(H)=V(G)$ and $\delta_{2k}(H)\leq k(d-2)$ so that its size is as large as possible. Then $|E(G)|\leq|E(H)|$. By the definition of $\delta_{2k}(H)$, there is an independent set $I\subseteq V(H)$ of order $2k$ such that $\delta_{2k}(H)=|N_{H}(I)|$, denoted by $\delta_{2k}$. According to the choice of ${H}$, we see that the induced subgraph ${H}[N_{H}(I)]$ and each connected component of ${H}-N_{H}(I)\cup I$ are complete graphs (if $V({H}-N_{H}(I)\cup I)\neq \emptyset$), and ${H}={H}[N_{H}(I)]\vee({H}-N_{H}(I))$.
	
	Similar to the proof of Theorem \ref{tt2}, we claim there is at most one non-trivial connected component in $H-N_{H}(I)$, then we consider the following two possible cases.
	
	{\noindent\textbf{Case 1.}} ${H}-N_{H}(I)$ has exactly one non-trivial connected component.
	
	In this case, let $F$ be the unique non-trivial connected component of ${H}-N_{H}(I)$ and $V(F)=n_1\geq2$. It is easy to check $V({H}-N_{H}(I)-V(F))=I$. Then ${H}=K_{\delta_{2k}}\vee(K_{n-\delta_{2k}-2k}\cup2kK_1)$ and $n_1=n-\delta_{2k}-2k\geq 2$ (i.e. $\delta_{2k}\leq n-2k-2$). Now we show $\delta_{2k}=\begin{cases}
		k(d-2),& \text{if $n\geq kd+2$}\\
		n-2k-2,& \text{if $n\leq kd+1$}
	\end{cases}$ and ${H}=\begin{cases}
	K_{k(d-2)}\vee(K_{n-kd}\cup2kK_1),& \text{if $n\geq kd+2$}\\
	K_{n-2k-2}\vee(K_{2}\cup2kK_1),& \text{if $n\leq kd+1$}
	\end{cases}$. 
	
	If $\delta_{2k}\leq k(d-2)-1$ and $n\geq kd+2$, then we take a new graph ${H}'$ obtained from ${H}$ by joining each vertex of $I$ with one vertex in $V(F)$. Then we have $|E({H}')|=|E({H})|+2k$ and $\delta_{2k}({H}')\leq|N_{{H}'}(I)|=|N_{{H}}(I)|+1\leq k(d-2),$ which is a contradiction with the choice of ${H}$. Thus $\delta_{2k}=k(d-2)$ by 
	$\delta_{2k}\leq k(d-2)$ and ${H}=K_{k(d-2)}\vee(K_{n-kd}\cup2kK_1)$. 
	
	If $\delta_{2k}\leq n-2k-3$ and $n\leq kd+1$, then we take a new graph ${H}''$ obtained from ${H}$ by joining each vertex of $I$ with one vertex in $V(F)$. Then we have $|E({H}')|=|E({H})|+2k$ and $\delta_{2k}({H}'')\leq|N_{{H}''}(I)|=|N_{{H}}(I)|+1\leq n-2k-2\leq k(d-2)-1,$ which is a contradiction with the choice of ${H}$. Thus $\delta_{2k}=n-2k-2$ by 
	$\delta_{2k}\leq n-2k-2$ and ${H}=K_{n-2k-2}\vee(K_{2}\cup2kK_1)$.
	
	{\noindent\textbf{Subcase 1.1.}} $n\geq kd+2$.
	
	In this subcase, we have $k\in\begin{cases}
	\{1,2\},& \text{if $\frac n3\leq d\leq \frac{n-1}{2}$}\\
		\{1\},& \text{if $\frac n2\leq d\leq n-1$}
	\end{cases}$, and ${H}=K_{k(d-2)}\vee(K_{n-kd}\cup2kK_1)$. Thus $|E(H)|=\binom{n-2k}{2}-2k^2(d-2)$. 
	
	{\noindent\textbf{Subcase 1.1.1.}} $\frac n3\leq d\leq \frac{n-1}{2}$.
	
	For $\max\{5,\frac n3\}\leq d \leq \frac{n-3}{2}$, we have $
	r_4(k)=\binom{n-6}{2}+6(n-6)-|E({H})|=\binom{n-6}{2}+6(n-6)-\binom{n-2k}{2}-2k^2(d-2)\geq\min\{r_4(1),r_4(2)\}> 0$, which implies $|E(G)|<\binom{n-6}{2}+6(n-6)$ by $|E({H})|\geq |E(G)|$, a contradiction.
	
	For $\frac{n-2}{2}\leq d \leq\frac{n-1}{2}$, we have 
	$r_5(k)=\binom{n-4}{2}+8(d-2)-|E(H)|\geq\min\{r_5(1),r_5(2)\}\geq 0$ by $d\geq 5$, which implies $|E(G)|\leq|E({H})|<\binom{n-4}{2}+8(d-2)$, a contradiction.
	
	{\noindent\textbf{Subcase 1.1.2.}} $\frac n2\leq d\leq n-1$.
	
	In this case,  $k=1$, $H=K_{d-2}\vee(K_{n-d}\cup 2K_1)$ and $|E({H})|=\binom{n-2}{2}+2(d-2)$.
	
	For $\frac{n}{2}\leq d \leq n-3$, we have $\binom{n-4}{2}+4(n-4)-|E(H)|=2n-2d-5\geq2(d+3)-2d-5=1>0$, which implies $|E(G)|\leq|E({H})|<\binom{n-4}{2}+4(n-4)$, a contradiction.
	
	For $n-2\leq d \leq n-1$, we have $|E(G)|\leq|E({H})|$ immediately, a contradiction.
	
	{\noindent\textbf{Subcase 1.2.}} $n\leq kd+1$.
	
	In this subcase, we have $k\geq\begin{cases}
		 3,& \text{if $\frac n3\leq d\leq \frac{n-3}{2}$}\\
		 2,& \text{if $\frac{n-2}{2}\leq d\leq n-3$}\\
		 1,& \text{if $n-2\leq d\leq n-1$}
	\end{cases}$, and ${H}=K_{n-2k-2}\vee(K_{2}\cup2kK_1)$. Thus $|E(H)|=\binom{n-2k}{2}-2k(n-2k-2)$.
	
	{\noindent\textbf{Subcase 1.2.1.}} $\frac n3\leq d\leq \frac{n-3}{2}$.
	
	In this subcase, we have $\binom{n-6}{2}+6(n-6)-|E({H})|=2k^2+3k-15>0$, which implies $|E(G)|<\binom{n-6}{2}+6(n-6)$ by $|E(G)|\leq|E(H)|$, a contradiction.
	
	{\noindent\textbf{Subcase 1.2.2.}} $\frac{n-2}{2}\leq d\leq n-3$.
	
	For $\frac{n-2}{2}\leq d\leq \frac{n-1}{2}$, we have $\binom{n-4}{2}+8(d-2)-|E(H)|=2k^2+3k-4n-6+8d\geq8-4n+8d\geq 0,$ which implies $|E(G)|\leq |E(H)|\leq\binom{n-4}{2}+8(d-2)$, a contradiction.
	
	For $\frac{n}{2}\leq d\leq n-3$, we have $\binom{n-4}{2}+4(n-4)-|E(H)|=2k^2+3k-6>0,$ which implies $|E(G)|\leq |E(H)|<\binom{n-4}{2}+4(n-4)$, a contradiction. 
	
	{\noindent\textbf{Subcase 1.2.3.}} $n-2\leq d\leq n-1$.
	
	In this case, $\binom{n-2}{2}+2(d-2)-|E(H)|=2k^2+3k-2n-1+2d\geq2k^2+3k-5\geq0,$ which implies $|E(G)|\leq\binom{n-2}{2}+2(d-2)$, a contradiction.

	{\noindent\textbf{Case 2.}} ${H}-N_{H}(I)$ has no non-trivial connected component.
	
	In this case, we have ${H}=K_{\delta_{2k}}\vee(n-\delta_{2k})K_1$. It is easy to check $2k\leq n-\delta_{2k}\leq 2k+1$.
	
	{\noindent\textbf{Subcase 2.1.}} $\delta_{2k}=n-2k$.
	
	In this subcase, we have ${H}=K_{n-2k}\vee2kK_1$ and $|E(G)|\leq|E(H)|=\binom{n-2k}{2}+2k(n-2k)$. 
	
	{\noindent\textbf{Subcase 2.1.1.}} $\frac n3\leq d\leq\frac{n-1}{2}$.
	
	By $2d+1\leq n=2k+\delta_{2k}\leq kd$, we have $k\geq 3$. 
	
	For $\frac n3\leq d\leq\frac{n-3}{2}$, we have $\binom{n-6}{2}+6(n-6)-|E(H)|=2k^2-k-15\geq0,$ which implies $|E(G)|\leq \binom{n-6}{2}+6(n-6)$ by $|E(H)|\geq |E(G)|$, a contradiction.

	For $\frac {n-2}{2}\leq d\leq\frac{n-1}{2}$, $\binom{n-4}{2}+8(d-2)-|E(H)|=2k^2-k-4n-6+8d\geq9-4n+8\cdot\frac{n-2}{2}>0,$ which implies $|E(G)|\leq \binom{n-4}{2}+8(d-2)$ by $|E(H)|\geq |E(G)|$, a contradiction.
	
	{\noindent\textbf{Subcase 2.1.2.}} $\frac {n}{2}\leq d\leq n-1$.
	
	By $d+1\leq n=\delta_{2k}+2k\leq kd$, we have $k\geq2$.
	
	For $\frac {n}{2}\leq d\leq n-3$, we have $\binom{n-4}{2}+4(n-4)-|E(G)|\geq2k^2-k-6\geq0,$ a contradiction.
	
	For $n-2\leq d\leq n-1$, $\binom{n-2}{2}+2(d-2)-|E(H)|=2k^2-k-5>0,$ a contradiction.
	
	{\noindent\textbf{Subcase 2.2.}} $\delta_{2k}=n-2k-1$.
	
	In this subcase, we have ${H}=K_{n-2k-1}\vee(2k+1)K_1$, and $|E(G)|\leq|E(H)|=\binom{n-2k-1}{2}+(2k+1)(n-2k-1)<|E(K_{n-2k}\vee2kK_1)|$. 
	
	{\noindent\textbf{Subcase 2.2.1.}} $\frac n3\leq d\leq\frac{n-3}{2}$.
	
	By $2d+3\leq n=\delta_{2k}+2k+1\leq kd+1$, we have $k\geq3$ and then $\binom{n-6}{2}+6(n-6)-|E(G)|>0$ by Subcase 2.1.1 and $|E(G)|<|E(K_{n-2k}\vee2kK_1)|$, a contradiction.
	
	{\noindent\textbf{Subcase 2.2.2.}} $\frac {n-2}{2}\leq d\leq n-3$.
	
	By $d+3\leq n=\delta_{2k}+2k+1\leq kd+1$, we have $k\geq2$. 
	
	For $\frac {n-2}{2}\leq d\leq \frac{n-1}{2}$ and $k=2$, we have $n=2d+1$ and $\binom{n-4}{2}+8(d-2)-|E(H)|=2k^2+k-10\geq0,$ which implies $|E(G)|\leq |E(H)|\leq\binom{n-4}{2}+8(d-2)$, a contradiction.
	
	For $\frac {n-2}{2}\leq d\leq \frac{n-1}{2}$ and $k\geq 3$, we have $\binom{n-4}{2}+8(d-2)-|E(G)|>0$ by Subcase 2.1.1, a contradiction.
	
	For $\frac {n}{2}\leq d\leq n-3$ and $k\geq 2$, we have $\binom{n-4}{2}+4(n-4)-|E(G)|>0$ by Subcase 2.1.2, a contradiction.
	
	{\noindent\textbf{Subcase 2.2.3.}} $n-2\leq d\leq n-1$.
	
	By $d+1\leq n\leq kd+1$, we have $k\geq1$. 
	
	If $k=1$, then $n=d+1$ and ${H}=K_{n-3}\vee3K_1=K_{d-2}\vee(K_{n-d}\cup 2K_1)$, which implies $|E(G)|\leq|E({H})|=\binom{n-2}{2}+2(d-2)$, a contradiction. 
	
	If $k\geq2$, then $\binom{n-2}{2}+2(d-2)-|E(G)|>0$ by Subcase 2.1.2, a contradiction.
	
	Combining the above arguments, $G$ has a spanning tree with leaf distance at least $d$.
	
	{\hfill $\blacksquare$ \par}
	
	\subsection{Proof of Theorem \ref{tt4}} 
	\hspace{1.5em}Let $H=K_t\vee(K_{n-t-\phi(t)}\cup\phi(t)K_1)$, $G$ be a connected graph of order $n\geq 6t+5$ with $\delta(G)\geq t$, $\alpha(G)\leq 5$, and $\rho(G)\geq\rho(H)$. Suppose to the contrary that $G$ contain no spanning tree with leaf distance at least $d$. In light of Proposition \ref{p3}, there exists a nonempty subset $S\subseteq V(G)$ such that $i(G-S)\geq\frac{2|S|}{d-2}.$ For convenience, let $|S|=s$ and so $i(G-S)\geq\lceil{\frac{2s}{d-2}}\rceil=\phi(s)$. Then $G^*=\begin{cases}K_s\vee(K_{n-s-\phi(s)}\cup\phi(s)K_1),&\text{if $n\geq s+\phi(s)+1$}\\ K_s\vee \phi(s)K_1,&\text{if $n=s+\phi(s)$} \end{cases}$ is the graph with the maximum size such that $V(G^*)=V(G)$, $i(G^*-S)\geq\phi(s)$, $\alpha(G^*)\leq\alpha(G)\leq 5$ and $\delta(G^*)=s\geq \delta(G)\geq t$. Clearly, $G^*$ has a Hamilton path by $\phi(s)\leq s$, then $G$ has a spanning tree with leaf distance at least $d$, thus $G\ncong G^*$, and $G$ is a proper spanning subgraph of $G^*$. Therefore, $\rho(G^*)>\rho(H)$ by Lemma \ref{rq1} and $\rho(G)\geq\rho(H)$.

	%In what follows, we proceed to prove Theorem \ref{tt4} by considering the following two cases.
	
	If $s=t$, then $G^*=H$, thus $\rho(G^*)=\rho(H)$, a contradiction with $\rho(G^*)>\rho(H)$. Next, we have $s\geq t+1$ and consider the following two cases since $n\geq s+\phi(s)$. 
%	{\noindent\textbf{Case 1.}} $s=t$.
	
	%In this case, $G^*=H$, then $\rho(G^*)=\rho(H)$, a contradiction with $\rho(G^*)>\rho(H)$.
	
	%{\noindent\textbf{Case 2.}} $s\geq t+1$.
	
	%Now we show $\rho(G^*)<\rho(H)$ by considering the following two subcases since $n\geq s+\phi(s)$. 
	
	%{\noindent\textbf{Subcase 2.1.}} $n=s+\phi(s)$.
	
	{\noindent\textbf{Case 1.}} $n=s+\phi(s)$.
	
	In this subcase, we have $G^*=K_s\vee \phi(s)K_1$. Clearly, $\phi(s)=\alpha(G^*)\leq\alpha(G)\leq5$. By $n\geq6t+5$ and $s\geq\phi(s)$, we have $5\geq\phi(s)=\lceil{\frac{2s}{d-2}}\rceil\geq\frac{2s}{d-2}\geq\frac{s+\phi(s)}{d-2}\geq\frac{6t+5}{d-2}>\frac{6t}{d-2}=3\phi(t)$. Then $\phi(t)\leq 1$ and thus $\phi(t)=\frac{2t}{d-2}=1$ and $t=\frac{d-2}{2}$ by $d-2\mid 2t$. Therefore, $s\leq \frac{\phi(s)(d-2)}{2}\leq\frac{5(d-2)}{2}=5t$, and $6t+5\leq n=s+\phi(s)\leq5t+5$, a contradiction.
	
	%{\noindent\textbf{Subcase 2.2.}} $n\geq s+\phi(s)+1$.
	
	{\noindent\textbf{Case 2.}} $n\geq s+\phi(s)+1$.

	In this subcase, we have $G^*=K_s\vee(K_{n-s-\phi(s)}\cup \phi(s)K_1)$. Consider the partition $V(G^*)=V(K_s)\cup V(K_{n-s-\phi(s)})\cup V(\phi(s)K_1)$. The corresponding quotient matrix of $A(G^*)$ equals $$B^*=\begin{pmatrix}
		s-1 & n-s-\phi(s) & \phi(s)\\
		s & n-s-\phi(s)-1 & 0\\
		s & 0 & 0\\
	\end{pmatrix}.$$ Hence, we derive the characteristic polynomial of $B^*$ as $f^*(x)=x^3-(n-\phi(s)-2)x^2+(\phi(s)+1-s\phi(s)-n)x+ns\phi(s)-s^2\phi(s)-s\phi(s)^2-s\phi(s).$ Note that the partition $V(G^*)=V(K_s)\cup V(K_{n-s-\phi(s)})\cup V(\phi(s)K_1)$ is equitable, then $\rho^*=\rho(G^*)$ is the largest root of $f^*(x)=0$ by Lemma \ref{Spectra}. Similarly, $\rho=\rho(H)$ is the largest root of $f(x)=0$, where $f(x)=x^3-(n-\phi(t)-2)x^2+(\phi(t)+1-t\phi(t)-n)x+nt\phi(t)-t^2\phi(t)-t\phi(t)^2-t\phi(t).$ 
	
	Since $K_{n-\phi(t)}\cup \phi(t)K_1$ is a proper spanning subgraph of $H$, we have $\rho=\rho(H)>\rho(K_{n-\phi(t)}\cup \phi(t)K_1)=n-\phi(t)-1$ by Lemma \ref{rq1}. By direct computation, we have \begin{equation}\label{6.2}
		\begin{aligned}
			f^*(\rho)=f^*(\rho)-f(\rho)=g_1(\rho),
		\end{aligned}
	\end{equation} where $g_1(x)=[\phi(s)-\phi(t)]x^2+[\phi(s)(1-s)-\phi(t)(1-t)]x+[s\phi(s)-t\phi(t)](n-1)-s\phi(s)[s+\phi(s)]+t\phi(t)[t+\phi(t)]$. Since $d\geq4$, $\phi(s)=\lceil{\frac{2s}{d-2}}\rceil\geq\lceil{\frac{2t+2}{d-2}}\rceil=\phi(t)+\lceil{\frac{2}{d-2}}\rceil=\phi(t)+1$, we have $s\geq \phi(s)\geq \phi(t)+1$ and $\phi(s)-\phi(t)=\lceil{\frac{2s}{d-2}}\rceil-\frac{2t}{d-2}\geq\frac{2(s-t)}{d-2}$. Then $\frac{\phi(t)(1-t)-\phi(s)(1-s)}{2[\phi(s)-\phi(t)]}=\frac{[\phi(s)-\phi(t)](s-1)+\phi(t)(s-t)}{2[\phi(s)-\phi(t)]}\leq\frac{s-1}{2}+\frac{\phi(t)(s-t)}{\frac{4(s-t)}{d-2}}=\frac{s+t-1}{2}<s<n-\phi(t)-1$ by $n-\phi(t)-1>n-\phi(s)-1\geq s$ and thus $g_1(x)$ is increasing in the interval $[n-\phi(t)-1,+\infty)$. 
	
	By $\rho>n-\phi(t)-1$, we have \begin{equation}\label{6.3}
	\begin{aligned}g_1(\rho)>g_1(n-\phi(t)-1)=&-\phi(s)s^2+[\phi(s)\phi(t)-\phi(s)^2]s-\phi(t)^2-\phi(t)^3-n\phi(s)\\&+n\phi(t)+n^2\phi(s)-n^2\phi(t)+t^2\phi(t)+\phi(s)\phi(t)\\&+\phi(s)\phi(t)^2+2\phi(t)^2n-2n\phi(s)\phi(t).\end{aligned}
	 \end{equation}
	 
	 Let $h_1(x)=-\phi(s)x^2+[\phi(s)\phi(t)-\phi(s)^2]x-\phi(t)^2-\phi(t)^3-n\phi(s)+n\phi(t)+n^2\phi(s)-n^2\phi(t)+t^2\phi(t)+\phi(s)\phi(t)+\phi(s)\phi(t)^2+2\phi(t)^2n-2n\phi(s)\phi(t).$ By $\phi(s)t\geq \frac{2st}{d-2}=\phi(t)s$, we have $\frac{\phi(s)\phi(t)-\phi(s)^2}{2\phi(s)}=\frac{\phi(t)-\phi(s)}{2}<0<t+1\leq s\leq\frac{\phi(s)t}{\phi(t)}$, and then \begin{equation}\label{6.4}
	 \begin{aligned}
	 h_1(s)\geq h_1\left(\frac{\phi(s)t}{\phi(t)}\right)=\left[\phi(s)-\phi(t)\right]h_2(\phi(s),\phi(t),n),
	 \end{aligned}\end{equation} where $h_2(\phi(s),\phi(t),n)= n^{2}+\frac{[-2 \phi(t)^{3}-\phi(t)^{2}]  n}{\phi(t)^{2}}+\frac{\phi(t)^{4}+\phi(t)^{3}-t^{2} \phi(t)^{2}-t \phi(s) \left(t+\phi(s)\right) \phi(t)-t^{2} \phi(s)^{2} }{\phi(t)^{2}}$. Since $\phi(t)+1\leq \phi(s)\leq5$, $\phi(t)\geq1$ and $n\geq6t+5$, $h_2(\phi(s),\phi(t),n)\geq\min\{h_2(\phi(s),\phi(t),n) \mid \phi(t)+1\leq \phi(s)\leq5,\phi(t)\geq1\}>0$. For example, $h_2(5,1,n)=4n^2-12n-124t^2-100t+8\geq h_2(5,1,6t+5)=20t^2+68t+48>0$ due to $n\geq6t+5$.
 
 	Combining this with (\ref{6.2}), (\ref{6.3}) and (\ref{6.4}), we infer $f^*(\rho)=g_1(\rho)>g_1(n-\phi(t)-1)=h_1(s)\geq h_1(\frac{\phi(s)t}{\phi(t)})=[\phi(s)-\phi(t)]h_2(\phi(s),\phi(t),n)>0.$
 	
 	Now we proceed to prove ${f^*}'(x)>0$ in the interval $[\rho,+\infty)$. By a simple calculation, we know that ${f^*}'(x)=3x^2-2(n-\phi(s)-2)x+(\phi(s)+1-s\phi(s)-n)$. Since $\frac{n-\phi(s)-2}{3}<n-\phi(t)-1<\rho$, ${f^*}'(x)$ is increasing in the interval $[n-\phi(t)-1,+\infty)$. 
 	
 	When $x\geq\rho$, ${f^*}'(x)\geq {f^*}'(\rho)>{f^*}'(n-\phi(t)-1)=[n+2\phi(s)-3\phi(t)][n-\phi(t)-1]+\phi(s)-\phi(t)-s\phi(s)\geq[s+3\phi(s)-3\phi(t)+1][s+\phi(s)-\phi(t)]+\phi(s)-\phi(t)-s\phi(s)> s(s+1)+\phi(s)-\phi(t)-s\phi(s)=s(s-\phi(s)+1)+\phi(s)-\phi(t)>0$ by $n\geq s+\phi(s)+1$ and $s\geq \phi(s)>\phi(t)$.
 	
 	 Hence ${f^*}(x)$ is increasing in the interval $[\rho,+\infty)$, which implies ${f^*}(x)\geq {f^*}(\rho)>0$ when $x\geq\rho$. Then $\rho(G^*)=\rho^*<\rho=\rho(H)$, a contradiction.
 	
 	Combining Case 1 and Case 2, $G$ has a spanning tree with leaf distance at least $d$. {\hfill $\blacksquare$ \par}
	
	\subsection{Proof of Theorem \ref{tt5}} 
	\hspace{1.5em}Let $H_l=K_{l(d-2)}\vee(K_{n-ld}\cup2lK_1)$, $G$ be a connected graph of order $n\geq 3d\geq 12$ with $\delta(G)\geq t$, $\alpha(G)\leq 5$ and \begin{equation}\label{f1}
		\rho(G)\geq \begin{cases}
		\rho(H_1),& \text{if $t\leq d-2$}\\
		\rho(H_2),& \text{if $t>d-2$}
	\end{cases}.\end{equation} 

	Suppose to the contrary that $G$ contains no spanning tree with leaf distance at least $d$. By applying Theorem \ref{t5}, there exists $k$ such that $1\leq k\leq\frac{\alpha(G)}{2}<3$, say, $1\leq k\leq 2$, and $\delta_{2k}(G)\leq k(d-2).$ Then $H_k$ is the graph with maximum size such that $\delta_{2k}(H_k)\leq k(d-2)$. Clearly, $H_k$ has a spanning tree with leaf distance at least $d$, then $G\ncong H_k$ and thus $G$ is a proper spanning subgraph of $H_k$. Therefore, Lemma \ref{rq1} implies that \begin{equation}\label{f2}
		\rho(G)<\rho(H_k).
	\end{equation} 

	If $t>d-2$, then $d-2<t\leq \delta(G)\leq\delta_{2k}(G)\leq k(d-2)$ and so $k=2$, which implies $\rho(G)<\rho(H_2)=\rho(K_{2(d-2)}\vee(K_{n-2d}\cup4K_1)),$ a contradiction with (\ref{f1}).
	
	Now we consider the following two cases for $t\leq d-2$.
		
	{\noindent\textbf{Case 1.}} $k=1$.
	
	In this case, we have $\rho(G)<\rho(H_k)=\rho(H_1)$ by (\ref{f2}), a contradiction with (\ref{f1}).
	
	{\noindent\textbf{Case 2.}} $k=2$.
	
	In this case, we have $\rho(G)<\rho(H_2)$ by (\ref{f2}). It suffices to show that $\rho(H_2)<\rho(H_1)$ as follows. By Lemma \ref{rho}, $\rho_l=\rho(H_l)$ is the largest root of $f_l(x)$ defined in Lemma \ref{rho} and $\rho_1>n-3$. By direct calculation, we derive \begin{equation}\label{6.6}
			f_2(\rho_1)=f_2(\rho_1)-f_1(\rho_1)=2g_2(\rho_1),
	\end{equation} where $g_2(x)=x^{2}+\left(-3 d+7\right) x-7 d^{2}+\left(11+3 n\right) d-6 n+6$. By $n\geq 3d$ and $\frac{3d-7}{2}<n-3<\rho_1$, we have \begin{equation}\label{6.7}
	\begin{aligned}
		g_2(\rho_1)>g_2(n-3)=-7 d^{2}+20 d+n^{2}-5 n-6.
	\end{aligned}
	\end{equation} 
	
	Let $h_3(x)=-7 x^{2}+20 x+n^{2}-5 n-6$. Clearly, $h_3(x)$ is decreasing within the interval $[4,\frac n3]$. Then \begin{equation}\label{6.9}
	h_3(d)\geq h_3(\frac n3)=\frac{2n^2}{9}+\frac{5n}{3}-6>0.
	\end{equation}
	
	 By (\ref{6.6}), (\ref{6.7}) and (\ref{6.9}), we have $f_2(\rho_1)=2g_2(\rho_1)>2g_2(n-3)=2h_3(d)>0.$
	
	Now we proceed to prove ${f_2}'(x)>0$ in the interval $[\rho_1,+\infty)$. By a simple calculation, we have ${f_2}'(x)=3 x^{2}+ \left(12-2n\right) x-8 d-n+21$. Since $\frac{n-6}{3}<n-3<\rho_2$, ${f_2}'(x)\geq {f_2}'(\rho_1)>{f_2}'(n-3)=n^{2}-8 d-n+12\geq9 d^{2}-11 d+12>0$ by $n\geq3d\geq 12$ and $x\geq\rho_1$. Hence $f_2(x)$ is increasing in the interval $[\rho_1,+\infty)$, which yields $f_2(x)\geq f_2(\rho_1)>0$ when $x\geq\rho_1$. Then $\rho(G)<\rho(H_2)=\rho_2<\rho_1=\rho(H_1)$ by (\ref{f2}), a contradiction with (\ref{f1}).
	
	Combining the above arguments, $G$ has a spanning tree with leaf distance at least $d$.
	
	{\hfill $\blacksquare$ \par}
	
	\subsection{Proof of Theorem \ref{tt6}} 
	\hspace{1.5em}Let $H_l=K_{l(d-2)}\vee(K_{n-ld}\cup2lK_1)$, $R=K_{n-4}\vee4K_1$, $G$ be a connected graph of order $n\geq 15$ with (\ref{th 1.13}) holds. It is easy to check that $H_l$ has a spanning tree with leaf distance at least $d$. We suppose to the contrary that $G$ contain no spanning tree with leaf distance at least $d$ and then $G\ncong H_l$. By applying Corollary \ref{c3}, there exists $k$ such that $1\leq k\leq\frac{\alpha(G)}{2}$ and $\delta_{2k}(G)\leq k(d-2).$ Let $s=\delta_{2k}(G)$. Then $s\leq k(d-2)$ and $G$ is a spanning subgraph of $G_k$, where $G_k= K_{s}\vee(K_{n-s-2k}\cup2kK_1)$. By Lemma \ref{rq1}, we admit \begin{equation}\label{f3}
		\rho(G)\leq\rho(G_k),
	\end{equation} and the equality of (\ref{f3}) holds if and only if $G=G_k$.

	By Lemma \ref{rho}, $\rho_l=\rho(H_l)$ is the largest root of $f_l(x)$ defined in Lemma \ref{rho}. Moreover, if $4\leq d\leq n-1$, then $n-3<\rho_1<n+\frac{2d}{n}-3$; and if $n\geq15$ and $\frac{n-2}{2}\leq d\leq n-1$, then $\rho_2>n+\frac dn-3$.
		
	Now we proceed to prove Theorem \ref{tt6} by considering three cases.

	{\noindent\textbf{Case 1.}} $\frac{n-2}{2}\leq d \leq\frac{n-1}{2}$.
	
	In this case, we verify $\rho(G_k)\leq\rho(H_2)$. It suffices to consider the following two subcases by $n\geq s+2k$.
	
	{\noindent\textbf{Subcase 1.1.}} $n\geq s+2k+1$.
	
	{\noindent\textbf{Subcase 1.1.1.}} $k\in\{1,2\}$.
	
	Since $n\geq 2d+1$ and $k\in\{1,2\}$, $G_k$ is a spanning subgraph of $H_k$. By Lemma \ref{rq1}, $\rho(G_k)\leq\rho(H_k)$ with equality if and only if $G_k=H_k$. Now we show that $\rho(H_1)<\rho(H_2)$. 
	
	By direct calculation, we obtain \begin{equation}\label{3.11}f_2(\rho_1)=f_2(\rho_1)-f_1(\rho_1)=2g_3(\rho_1),
	\end{equation} where $g_3(x)=x^2+(7-3d)x+11d-7d^2+3dn-6n+6$. By $\frac{3d-7}{2}<n-3<\rho_1<n+\frac{2d}{n}-3$, we have \begin{equation}\label{3.12}
	\begin{aligned}
		g_3(\rho_1)<g_3(n+\frac{2d}{n}-3)=(\frac{4}{n^2}-\frac 6n-7)d^2+(24+\frac 2n)d-6-5n+n^2.
	\end{aligned}
	\end{equation}
	
	Let $p_1(x)=(\frac{4}{n^2}-\frac 6n-7)x^2+(24+\frac 2n)x-6-5n+n^2$. Since $\frac{-12-\frac{1}{n}}{\frac{4}{n^2}-\frac 6n-7}=\frac{12n^2+n}{7n^2+6n-4}<4<\frac{n-2}{2}\leq d\leq n-1$, we have \begin{equation}\label{3.13}
	\begin{aligned}
		p_1(d)\leq p_1(\frac{n-2}{2})=-\frac{3n^2}{4}+\frac{25n}{2}-29+\frac{4}{n^2}-\frac{12}{n}<-\frac{3n^2}{4}+\frac{25n}{2}-29=p_2(n),
	\end{aligned}
	\end{equation} where $p_2(x)=-\frac{3x^2}{4}+\frac{25x}{2}-29$. Clearly, $p_2(x)$ is decreasing in the interval $[14,+\infty)$ and thus $p_2(n)\leq p_2(14)=-1$ by $n\geq 14$. 
	
	Together with (\ref{3.11}), (\ref{3.12}) and (\ref{3.13}), we obtain $f_2(\rho_1)=2g_3(\rho_1)<2g_3(n+\frac{2d}{n}-3)=2p_1(d)\leq 2p_1(\frac{n-2}{2})=2p_2(n)\leq2p_2(14)<0,$ which implies $\rho_1=\rho(H_1)<\rho_2=\rho(H_2)$. Then for $k\in\{1,2\}$, $\rho(G_k)\leq\rho(H_2)$ with equality if and only if $k=2$ and $G_k\cong H_2$.
	
	{\noindent\textbf{Subcase 1.1.2.}} $k\geq 3$.
	
	In view of Lemma \ref{f*}, $\rho(G_k)$ is the largest root of $F_k(x)=0$, where $F_k(x)$ defined in Lemma \ref{f*}. By plugging $\rho_2=\rho(H_2)$ into $x$ of $F_k(x)-f_2(x)$, we obtain \begin{equation}\label{3.14}
		F_k(\rho_2)=F_k(\rho_2)-f_2(\rho_2)=2g_4(\rho_2),
	\end{equation} where $g_4(x)=(k-2)x^2+(k-ks-10+4d)x-2k^2s+kns-ks^2-ks+8d^2-12d-4dn+8n-8$.

	By $d\geq \frac{n-2}{2}$ and $n\geq s+2k+1$, we admit $\frac{ks-k+10-4d}{2(k-2)}<2k+s-2<n+\frac dn-3<\rho_2$ and then \begin{equation}\label{3.15}
		g_4(\rho_2)>g_4(n+\frac dn-3)=p_3(s),
	\end{equation} where $p_3(x)=-kx^2+(2k-2k^2-\frac{dk}{n})x+4-28d+8d^2+6k+10n-2n^2+\frac{2d}{n}-5kn+kn^2-\frac{2d^2}{n^2}+2dk+\frac{4d^2}{n}-\frac{5dk}{n}+\frac{d^2k}{n^2}$.
	
	Since $\frac{2k-2k^2-\frac{dk}{n}}{2k}<0<s\leq n-2k-1$, $p_3(s)\geq p_3(n-2k-1)= p_4(k),$ where $p_4(x)=(2n-6+\frac{2d}{n})x^2+(3+d-n-\frac{4d}{n}+\frac{d^2}{n^2})x+4-28d+8d^2+10n-2n^2-\frac{2d^2}{n^2}+\frac{2d}{n}+\frac{4d^2}{n}$. By $\frac{3+d-n-\frac{4d}{n}+\frac{d^2}{n^2}}{-2(2n-6+\frac{2d}{n})}<3\leq k,$ we have $p_4(x)$ is increasing in the interval $[3,+\infty)$ and then $p_4(k)\geq p_4(3)=(8+\frac 4n+\frac{1}{n^2})d^2+(\frac 8n-25)d-41+25n-2n^2$. 
	
	Let $p_5(x)=(8+\frac 4n+\frac{1}{n^2})x^2+(\frac 8n-25)x-41+25n-2n^2$. Then $p_5(d)\geq p_5(\frac{n-2}{2})=\frac{11n}{2}-\frac 5n+\frac{1}{n^2}-\frac{31}{4}>0$ by $\frac{25-\frac8n}{2(8+\frac 4n+\frac{1}{n^2})}<\frac{n-2}{2}\leq d$. 
	
	Combining (\ref{3.14}) and (\ref{3.15}), we have $F_k(\rho_2)=2g_4(\rho_2)>2g_4(n+\frac dn-3)=2p_3(s)\geq2p_3(n-2k-1)=2p_4(k)\geq2p_4(3)=2p_5(d)\geq2p_5(\frac{n-2}{2})>0.$ By Lemma \ref{f*}, we have $F_k(x)\geq F_k(\rho_2)>0$ when $x\geq\rho_2$, which implies $\rho(G_k)<\rho_2=\rho(H_2)$.
	
	 {\noindent\textbf{Subcase 1.2.}} $n=2k+s$.

	In this subcase, we have $G_k=K_s\vee2kK_1=K_{n-2k}\vee2kK_1$ and the quotient matrix of adjacent matrix $A(G_k)$ by virtue of the partition $\{V(K_{n-2k}),V(2kK_1)\}$ can be expressed as $\begin{pmatrix}
		n-2k-1&2k\\
		n-2k&0\\
	\end{pmatrix}.$ The corresponding characteristic polynomial is $\theta_k(x)=x^2-(n-2k-1)x-2k(n-2k).$ By Lemma \ref{Spectra}, $\rho(G_k)$ is the largest root of $\theta_k(x)=0$. Then we have \begin{equation}\label{3.17}
	\begin{aligned}
		\rho_2\theta_k(\rho_2)=\rho_2\theta_k(\rho_2)-f_2(\rho_2)=g_5(\rho_2),
	\end{aligned}
	\end{equation}where $g_5(x)=(2k-5)x^2+(4k^2-2kn-21+n+8d)x+16d^2-24d-8dn+16n-16$. 
	
	By $2d+1\leq n=2k+s\leq2k+k(d-2)=kd$, we have $k\geq 3$. Since $\frac{-(4k^2-2kn-21+n+8d)}{2(2k-5)}<n-3<n+\frac dn-3<\rho_2$, we have $g_5(x)$ is increasing in the interval $[n+\frac dn-3,+\infty)$ and  $
		g_5(\rho_2)>g_5(n+\frac dn-3)=(\frac{4d}{n}+4n-12)k^2+(18+2d-6n-\frac{12d}{n}+\frac{2d^2}{n^2})k+2-57d+16d^2+22n-4n^2+\frac{9d}{n}-\frac{5d^2}{n^2}+\frac{8d^2}{n}.$ 
		
		Let $p_6(x)=(\frac{4d}{n}+4n-12)x^2+(18+2d-6n-\frac{12d}{n}+\frac{2d^2}{n^2})x+2-57d+16d^2+22n-4n^2+\frac{9d}{n}-\frac{5d^2}{n^2}+\frac{8d^2}{n}$. By $\frac{-(18+2d-6n-\frac{12d}{n}+\frac{2d^2}{n^2})}{2(\frac{4d}{n}+4n-12)}<3\leq k$, $p_6(x)$ is increasing in the interval $[3,+\infty)$, which implies $p_6(k)\geq p_6(3)=(16+\frac{1}{n^2}+\frac{8}{n})d^2+(\frac 9n-51)d-52+40n-4n^2$. Let $p_7(x)=(16+\frac{1}{n^2}+\frac{8}{n})x^2+(\frac 9n-51)x-52+40n-4n^2$. By $\frac{-(\frac 9n-51)}{2(16+\frac{1}{n^2}+\frac{8}{n})}<4\leq\frac{n-2}{2}\leq d$, $p_7(d)\geq p_7(\frac{n-2}{2})=\frac n2-\frac 2n+\frac{1}{n^2}+\frac{47}{4}>0.$

	It can be inferred from (\ref{3.17}) and above arguments that $\rho_2\theta_k(\rho_2)=g_5(\rho_2)>g_5(n+\frac dn -3)=p_6(k)\geq p_6(3)=p_7(d)\geq p_7(\frac{n-2}{2})>0$ and so $\theta_k(\rho_2)>0$. Since $\frac{n-2k-1}{2}<n-3<n+\frac dn-3<\rho_2$, $\theta_k(x)$ is increasing in the interval $[\rho_2,+\infty)$. When $x\geq\rho_2$, $\theta_k(x)\geq \theta_k(\rho_2)>0$, which implies $\rho(G_k)<\rho_2=\rho(H_2)$.
	
	According to Subcase 1.1 and Subcase 1.2, we obtain $\rho(G)\leq\rho(G_k)\leq\rho(H_2)$ by (\ref{f3}). By the assumption $\rho(G)\geq\rho(H_2)$ in (\ref{th 1.13}), we have $\rho(G)=\rho(H_2)$, then $k=2$ and $G\cong G_k\cong H_2$, which is a contradiction with $G\ncong H_l$.

	{\noindent\textbf{Case 2.}} $\frac{n}{2}\leq d\leq n-3$.
	
	In this case, we verify $\rho(G_k)\leq\rho(R)$, where $R=K_{n-4}\vee4K_1$.  
	
	{\noindent\textbf{Subcase 2.1.}} $n\geq s+2k+1$.
	
	{\noindent\textbf{Subcase 2.1.1.}} $k=1$.
	
	 By Subcase 1.2, $\rho(R)$ is the largest root of $\theta_2(x)=0$. By a direct calculation, \begin{equation}\label{3.19}
		\rho_1\theta_2(\rho_1)=\rho_1\theta_2(\rho_1)-f_1(\rho_1)=g_6(\rho_1),
	\end{equation} where $g_6(x)=x^2-(3n-9-2d)x+2d^2-2d-2dn+4n-4.$ Since $\frac{3n-9-2d}{2}\leq \frac{3n-9-n}{2}<n-3<\rho_1<n+\frac{2d}{n}-3$, we have \begin{equation}\label{3.20}
	\begin{aligned}
		g_6(\rho_1)<g_6(n+\frac{2d}{n}-3)=&(2+\frac 4n+\frac{4}{n^2})d^2+(\frac 6n-10)d-22+16n-2n^2.
	\end{aligned}
	\end{equation}
	
	 Let $p_8(x)=(2+\frac 4n+\frac{4}{n^2})x^2+(\frac 6n-10)x-22+16n-2n^2$. By $\frac{-(\frac 6n-10)}{2(2+\frac 4n+\frac{4}{n^2})}=\frac{5n^2-3n}{2n^2+4n+4}< 4<\frac n2\leq d \leq n-3$, we have $p_8(d)\leq p_8(n-3)=-2n+12+\frac{36}{n^2}-\frac6n<0.$ 
	 
	 Combining (\ref{3.19}) and (\ref{3.20}), we admit $\rho_1\theta_2(\rho_1)=g_6(\rho_1)<g_6(n+\frac{2d}{n}-3)=p_8(d)\leq p_8(n-3)<0,$ which implies $\rho_1=\rho(H_1)<\rho(R)$. Since $G_1$ is a spanning graph of $H_1$, $\rho(G_1)\leq\rho(H_1)<\rho(R)$.

	{\noindent\textbf{Subcase 2.1.2.}} $k\geq 2$.
	
	Recall that $\rho(G_k)$ is the largest root of $F_k(x)=0$. By a direct computation, \begin{equation}\label{f5}
		F_k(\rho(R))=F_k(\rho(R))-\rho \theta_2(\rho(R))=g_7(\rho(R)),
	\end{equation}
			where $g_7(x)=(2k-3)x^2+(3n-15+2k-2ks)x-4k^2s+2kns-2ks^2-2ks$. Since $\theta_2(n-2)=-n+10<0$ and $n>10$, we have $\rho(R)>n-2$. By $s\leq n-2k-1$ and $\frac{-(3n-15+2k-2ks)}{2(2k-3)}<n-2<\rho(R)$, we obtain \begin{equation}\label{f6}
				g_7(\rho(R))>g_7(n-2)=-2ks^2+(2k-4k^2)s+18+4k-9n-6kn+2kn^2.
			\end{equation}
	
	Let $p_9(x)=-2kx^2+(2k-4k^2)x+18+4k-9n-6kn+2kn^2$. By $\frac{2k-4k^2}{4k}<0<s\leq n-2k-1$, $p_9(s)\geq p_9(n-2k-1)=(4n-8)k^2-9n+18\geq 4(4n-8)-9n+18=7n-14>0$. 
	
	Combining (\ref{f5}) and (\ref{f6}), we obtain $F_k(\rho(R))=g_7(\rho(R))>g_7(n-2)=p_9(s)>0.$ By Lemma \ref{f*}, we have $F_k(x)\geq F_k(\rho(R))>0$ when $x\geq\rho(R)$, which implies $\rho(G_k)<\rho(R)$. 
	
	{\noindent\textbf{Subcase 2.2.}} $n= s+2k$.
	
	In this subcase, we admit $G_k=K_s\vee2kK_1=K_{n-2k}\vee2kK_1$ and $d+3\leq n=2k+s\leq2k+k(d-2)=kd$. Then $k\geq 2$ and $G_k$ is a spanning subgraph of $R$. By Lemma \ref{rq1}, $\rho(G_k)\leq\rho(R)$ with equality if and only if $G_k\cong R$.
	
	According to Subcase 2.1 and Subcase 2.2, we obtain $\rho(G)\leq\rho(G_k)\leq\rho(R)$ by (\ref{f3}). By the assumption $\rho(G)\geq\rho(R)$ in (\ref{th 1.13}), we have $\rho(G)=\rho(G_k)=\rho(R)$, then $G\cong G_k\cong R$, which implies $G=K_{n-4}\vee4K_1$ has a spanning tree with leaf distance at least $d$, a contradiction.
	
	{\noindent\textbf{Case 3.}} $n-2\leq d\leq n-1$.
	
	In this case, we verify $\rho(G_k)\leq\rho(H_1)$. 
	
	{\noindent\textbf{Subcase 3.1.}} $n\geq s+2k+1$.
	
	For $k=1$, we have $\rho(G_1)\leq\rho(H_1)$ with equality if and only if $G_1\cong H_1$ since $G_1$ is a spanning subgraph of $H_1$ and Lemma \ref{rq1}.
	
	Now we prove the subcase when $k\geq 2$. Since $f_1(n-2)=-2d^2+6d+n^2-3n-2\leq-2(n-2)^2+6(n-2)+n^2-3n-2=-n^2+11n-22<0$ by $d\geq n-2$, we obtain $\rho_1>n-2$. By plugging $\rho_1=\rho(H_1)$ into $x$ of $F_k(x)-f_1(x)$, we obtain \begin{equation}\label{3.23}
		F_k(\rho_1)=F_k(\rho_1)-f_1(\rho_1)=2g_8(\rho_1),
	\end{equation} where $g_8(x)=(k-1)x^2+(k-ks+d-3)x-2k^2s+kns-ks^2-ks+d^2-d-dn+2n-2$. 
	
	By $\frac{-(k-ks+d-3)}{2(k-1)}<n-2<\rho_1$, we derive $g_8(\rho_1)>g_8(n-2)= p_{10}(s)$, where $p_{10}(x)=-kx^2+(k-2k^2)x-3d+d^2+2k+3n-n^2-3kn+kn^2$. By $\frac{k-2k^2}{2k}<0<s\leq n-2k-1$, $p_{10}(s)\geq p_{10}(n-2k-1)=(2n-4)k^2-3d+d^2+3n-n^2\geq4(2n-4)-3d+d^2+3n-n^2= p_{11}(d)\geq p_{11}(n-2)=4n-6>0$, where $p_{11}(x)=x^2-3x-n^2+11n-16$. 
	
	In view of (\ref{3.23}), $F_k(\rho_1)=2g_8(\rho_1)>2g_8(n-2)=2p_{10}(s)\geq2p_{10}(n-2k-1)=2p_{11}(d)\geq2p_{11}(n-2)>0.$ By Lemma \ref{f*}, we have $F_k(x)\geq F_k(\rho_1)>0$ when $x\geq\rho_1$, which implies $\rho(G_k)<\rho_1=\rho(H_1)$. 

	{\noindent\textbf{Subcase 3.2.}} $n=2k+s$.
	
	In this subcase, $G_k=K_{n-2k}\vee2kK_1$. By $d+1\leq n=2k+s\leq 2k+k(d-2)=kd$, we have $k\geq 2$. Thus $G_k$ is a spanning subgraph of $R$ and $\rho(G_k)\leq\rho(R)$. In what follows, we verify $\rho(R)<\rho(H_1)$. Recall that $g_6(\rho_1)=\rho_1\theta_2(\rho_1)=\rho_1\theta_2(\rho_1)-f_1(\rho_1)=\rho_1^2-(3n-9-2d)\rho_1+2d^2-2d-2dn+4n-4$. 
	
	By $\frac{3n-9-2d}{2}\leq \frac{n-5}{2}<n-2<\rho_1$, $g_6(\rho_1)>g_6(n-2)=2d^2-6d-18+15n-2n^2=p_{12}(d)\geq p_{12}(n-2)=2+n>0$, which implies $\rho_1\theta_2(\rho_1)=g_6(\rho_1)>g_6(n-2)=p_{12}(d)\geq p_{12}(n-2)>0$. Since $\theta_2(x)$ is increasing in the $[\rho_1,+\infty)$, $\theta_2(x)\geq \theta_2(\rho_1)>0$ when $x\geq\rho_1$, then $\rho(R)<\rho_1=\rho(H_1)$. Hence $\rho(G_k)\leq\rho(R)<\rho(H_1)$.
	
	According to Subcase 3.1 and Subcase 3.2, we obtain $\rho(G)\leq\rho(G_k)\leq\rho(H_1)$ by (\ref{f3}). By the assumption $\rho(G)\geq\rho(H_1)$ in (\ref{th 1.13}), we have $\rho(G)=\rho(H_1)$, then $\rho(G)=\rho(G_k)=\rho(H_1)$ and $G\cong G_1\cong H_1$, which contradicts with $G\cong H_l$.
	
	Combining the above arguments, $G$ has a spanning tree with leaf distance at least $d$.
	
	{\hfill $\blacksquare$ \par}
	
\section{The existence of spanning trees with leaf degree at most $k$ with respect to the minimum degree}

\hspace{1.5em}Kaneko \cite{AK1} introduced the concept of leaf degree of a spanning tree and provided the following criterion for a connected graph to have a spanning tree with leaf degree at most $k$.

\begin{theorem}{\rm(\!\!\cite{AK1})}\label{t1}
	Let $G$ be a connected simple graph and $k\geq 1$ be an integer. Then $G$ has a spanning tree with leaf degree at most $k$ if and only if $i(G-S)<(k+1)|S|$ for any nonempty subset $ S \subseteq V(G)$, where $i(G-S)$ is the number of isolated vertices in $G-S$.
\end{theorem}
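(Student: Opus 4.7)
The plan is to prove the two directions separately, since they rely on quite different techniques. Necessity is a direct double-counting argument inside the given spanning tree, while sufficiency is more subtle and will require either an extremal argument on spanning trees of $G$ or a reduction to a Tutte-type factor theorem.

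For necessity, suppose $T$ is a spanning tree of $G$ with leaf degree at most $k$, and fix any nonempty $S \subseteq V(G)$. Since $T$ is a subgraph of $G$, every vertex isolated in $G-S$ is also isolated in $T-S$, so it suffices to bound $i(T-S)$. I would partition the isolated vertices of $T-S$ into $I_1$, the vertices that are leaves of $T$, and $I_2$, the vertices of $T$-degree at least two. Each $v \in I_1$ has its unique $T$-neighbor in $S$, and by hypothesis each $s \in S$ is adjacent in $T$ to at most $k$ leaves of $T$, so $|I_1| \leq k|S|$. Every vertex of $I_2$ has all its $T$-neighbors in $S$, so the $T$-edges incident to $I_2$ form a bipartite subforest between $S$ and $I_2$ in which every vertex of $I_2$ has degree at least $2$; counting edges in this forest gives $2|I_2| \leq |S|+|I_2|-1$, hence $|I_2| \leq |S|-1$. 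Combining the bounds yields $i(G-S) \leq |I_1|+|I_2| \leq (k+1)|S|-1 < (k+1)|S|$.

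For sufficiency, I would argue by contrapositive: assuming $G$ has no spanning tree of leaf degree at most $k$, I must produce a nonempty $S$ violating the inequality. The plan is to select a spanning tree $T$ of $G$ that minimizes, in lexicographic order, first the number of vertices with leaf degree exceeding $k$ and then a secondary measure such as the total leaf-degree excess. Setting $S_T$ to be the nonempty set of vertices $v$ with $\ell_T(v) > k$, I would use edge-exchanges between $T$ and cotree edges of $G$ to show that the extremal choice of $T$ forces every isolated vertex of $G-S_T$ to actually appear, and a careful count of the leaves pinned to vertices of $S_T$ would then yield $i(G-S_T) \geq (k+1)|S_T|$. A backup plan is to attach $k$ artificial pendants to each vertex of $G$ to form a graph $G^+$, reducing the existence of the desired spanning tree of $G$ to the existence of a spanning tree of $G^+$ whose leaves are all artificial, which can then be characterized via a Tutte-style factor condition matching the given inequality.

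The hard step is clearly the sufficiency direction, where I must show that the extremal tree $T$ admits no improving edge-swap. Controlling how a swap changes the leaf degrees at several vertices simultaneously is delicate, and extracting the violating set $S$ directly from the structure forced by the extremality is the main technical obstacle.
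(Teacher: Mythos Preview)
The paper does not prove this theorem: it is quoted from Kaneko \cite{AK1} as a background tool, and the paper's own arguments rest on Theorem~\ref{t2} rather than on this one. So there is no in-paper proof to compare against, and your proposal has to be judged on its own.

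Your necessity argument is correct and essentially complete. The split of the isolated vertices of $T-S$ into leaves $I_1$ and internal vertices $I_2$, the bound $|I_1|\le k|S|$ from the leaf-degree hypothesis, and the forest count $2|I_2|\le |S|+|I_2|-1$ (edges from $I_2$ to $S$ sit inside the tree $T$, hence inside a forest on at most $|S|+|I_2|$ vertices) combine to give $i(G-S)\le i(T-S)\le (k+1)|S|-1$.

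Your sufficiency proposal, however, is only a plan, and the gap is exactly where you locate it. After fixing an extremal tree $T$ and setting $S_T=\{v:\ell_T(v)>k\}$, the leaves of $T$ attached to $S_T$ number more than $k|S_T|$, but to contribute to $i(G-S_T)$ each such leaf must be isolated in $G-S_T$, not merely in $T-S_T$. If a leaf $u$ with $T$-neighbour $s\in S_T$ has a cotree edge $uw$ with $w\notin S_T$, the swap $T-us+uw$ lowers the leaf degree at $s$ but can create a new leaf at the former $T$-neighbour of $w$ on the $u$--$w$ path, or otherwise raise leaf degrees elsewhere, so neither of your two invariants need decrease. Kaneko's original proof handles this with a more refined extremal choice and a genuine case analysis that you have not supplied. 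Your pendant-attachment backup is a legitimate alternative route, but it only relocates the difficulty to a factor theorem for $G^+$ that you also leave unproved. In short: necessity is done, sufficiency still has a real gap.
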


Let $G$ be a graph of order $n$ with the minimum degree $\delta(G)\geq t$. Assume that $G$ contains no spanning tree with leaf degree at most $k$. Then there exists a nonempty subset $S$ such that $i(G-S)\geq(k+1)|S|$ by Theorem \ref{t1}. Clearly, $|S|\geq \delta(G)\geq t$ and thus $n\geq (k+2)|S|\geq (k+2)t$. Hence if $n<(k+2)t$, $G$ contains a spanning tree with leaf degree at most $k$.

\begin{corollary}
	Let $G$ be a graph of order $n$ with the minimum degree $\delta(G)\geq t$. If $n<(k+2)t$, then $G$ contains a spanning tree with leaf degree at most $k$.
\end{corollary}

In what follows, we only consider $n\geq (k+2)t$. 

By using Theorem \ref{t1} and typical spectral techniques, many researchers have studied the existence of spanning trees with leaf degree at most $k$ in connected graphs. Ao, Liu and Yuan \cite{GA} as well as Zhou, Sun and Liu \cite{DQ} established the spectral conditions for the existence of a spanning tree with leaf degree at most $k$. Combining the minimum degree of graphs, Ao, Liu, Yuan, Ng and Cheng \cite{GYA} also presented the following adjacent spectral result to ensure that a connected graph has a spanning tree with leaf degree at most $k$.

\begin{theorem}{\rm(\!\!\cite{GYA})}\label{t2}
	Let $G$ be a connected graph of order $n\geq 3(k+2)\delta+2$ with minimum degree $\delta$, where $k\geq1$ is an integer. If $\rho(G)\geq\rho(K_\delta\vee(K_{n-k\delta-2\delta}\cup(k\delta+\delta)K_1))$, then $G$ has a spanning tree with leaf degree at most $k$ unless $G\cong K_\delta\vee(K_{n-k\delta-2\delta}\cup(k\delta+\delta)K_1)$.
\end{theorem}

In fact, it is not easy to verify whether a graph contains a spanning tree with a leaf degree of at most $k$ by using Theorem \ref{t1}, because we need demonstrate $i(G-S)<(k+1)|S|$ for all nonempty subsets $S\subseteq V(G)$. Motivated by \cite{AK1,GYA}, in this paper, we study the sufficient conditions to ensure that a graph $G$ with the minimum degree $\delta(G)$ has a spanning tree with leaf degree at most $k$ in terms of its lower bound of size and spectral radius of $G$, and obtain Theorems \ref{ta}, \ref{tb}, \ref{tc} as follows.

\begin{theorem}\label{ta}
	Let $G$ be a connected graph of order $n\geq (k+2)t$ with the minimum degree $\delta(G)$, where $ t\leq\delta(G)$ and $k$ are positive integers. 
	
	\noindent{\rm (i)} For $n\geq(k+4)t+1+\frac{2t+2}{k+1}$, if $|E(G)|\geq |E(K_t\vee(K_{n-(k+2)t}\cup(k+1)tK_1))|$, then $G$ has a spanning tree with leaf degree at most $k$ unless $G\cong K_t\vee(K_{n-(k+2)t}\cup(k+1)tK_1)$.
	
	\noindent{\rm (ii)} For $n\leq(k+4)t+1+\frac{2t+1}{k+1}$, if $|E(G)|\geq	\max\{|E(K_t\vee(K_{n-(k+2)t}\cup(k+1)tK_1))|,$ $|E(K_{\lfloor{\frac{n}{k+2}}\rfloor}\vee(K_{n-(k+2)\lfloor{\frac{n}{k+2}}\rfloor}\cup(k+1)\lfloor{\frac{n}{k+2}}\rfloor K_1))|\}$, then $G$ has a spanning tree with leaf degree at most $k$ unless $G\cong K_t\vee(K_{n-(k+2)t}\cup(k+1)tK_1)$ or $K_{\lfloor{\frac{n}{k+2}}\rfloor}\vee(K_{n-(k+2)\lfloor{\frac{n}{k+2}}\rfloor}\cup(k+1)\lfloor{\frac{n}{k+2}}\rfloor K_1)$.
\end{theorem}

Using different methods, we obtain Theorem \ref{tb} which is an improvement of Theorem \ref{t2}.

\begin{theorem}\label{tb}
	Let $G$ be a connected graph of order $n\geq 2(k+2)t$, where $t\leq\delta(G)$ and $k$ are positive integers.
	
	\noindent{\rm (i)} If $(k,n)\neq(1,6t)$ and $\rho(G)\geq\rho(K_t\vee(K_{n-(k+2)t}\cup(k+1)tK_1))$, then $G$ has a spanning tree with leaf degree at most $k$ unless $G\cong K_t\vee(K_{n-(k+2)t}\cup(k+1)tK_1)$.
	
	\noindent{\rm (ii)} If $(k,n)=(1,6t)$, $t\geq 3$ and $\rho(G)\geq\rho(K_t\vee(K_{3t}\cup2tK_1)$, then $G$ has a spanning tree with leaf degree at most $k$ unless $G\cong K_t\vee(K_{3t}\cup2tK_1)$.
	
	\noindent{\rm (iii)} If $(k,n)=(1,6t)$, $t\in\{1,2\}$ and $\rho(G)\geq \rho(K_{2t}\vee4tK_1)$, then $G$ has a spanning tree with leaf degree at most $k$ unless $G\cong K_{2t}\vee4tK_1$.	
\end{theorem}

\begin{theorem}\label{tc}
	Let $G$ be a connected graph of order $n\geq 2(k+3)t+1$, where $t\leq\delta(G)$ and $k$ are positive integers. If $q(G)\geq q(K_t\vee(K_{n-(k+2)t}\cup(k+1)tK_1)$, then $G$ has a spanning tree with leaf degree at most $k$ unless $G\cong K_t\vee(K_{n-(k+2)t}\cup(k+1)tK_1$.
\end{theorem}	
	
In what follows, we give the proofs of Theorems \ref{ta}, \ref{tb} and \ref{tc}, respectively. Firstly, we present the following proposition.

\begin{proposition}\label{leaf degree}
	Let $n$, $k$, $l$ be integers with $n\geq (k+2)l$. Then $K_l\vee(K_{n-(k+2)l}\cup(k+1)lK_1)$ has no spanning tree with leaf degree at most $k$. In particular, if $n\geq (k+2)l+1$, then
	
	\noindent{\rm (i)} $\rho(K_l\vee(K_{n-(k+2)l}\cup(k+1)lK_1))$ is the largest root of $x^{3}-\left(n-l k-l-2\right) x^{2}-\left(l^{2} k-l k+l^{2}-l+n-1\right) x-k^{2} l^{3}-3 k \,l^{3}+k \,l^{2} n-l^{2} k-2 l^{3}+l^{2} n-l^{2}=0$.
	
	\noindent{\rm (ii)} $\rho(K_l\vee(k+1)lK_1)$ is the largest root of $x^{2}-\left(l-1\right) x-l^{2} \left(k+1\right)=0$.
	
	\noindent{\rm (iii)} $q(K_l\vee(K_{n-(k+2)l}\cup(k+1)lK_1))$ is the largest root of $x^{3}-\left(3 n-2 l k-l-4\right) x^{2}-\left(4 l^{2} k+2 k l n-4 l k+4 l^{2}-l n-2 n^{2}+6 n-4\right) x -2 k^{2} l^{3}-4 k \,l^{3}+4 k \,l^{2} n-6 l^{2} k-2 l^{3}+4 l^{2} n-2 l \,n^{2}-6 l^{2}+6 l n-4 l=0$.
	
	\noindent{\rm (iv)} $q(K_l\vee(k+1)lK_1)$ is the largest root of $x^{2}-\left(k l+4 l-2\right) x+2 l^{2}-2 l=0$.
	
	\noindent{\rm (v)} $\rho(K_{2l}\vee 4lK_1)>\rho(K_l\vee(K_{3l}\cup 2lK_1))$ for $l\in\{1,2\}$.
	
	\noindent{\rm (vi)} $\rho(K_{2l}\vee 4lK_1)<\rho(K_l\vee(K_{3l}\cup 2lK_1))$ for $l\geq 3$.
\end{proposition} 

\begin{proof}
	Let $G=K_l\vee(K_{n-(k+2)l}\cup(k+1)lK_1)$ and $S=V(K_l)$. Since $i(G-S)\geq (k+1)l=(k+1)|S|$, $G$ has no spanning tree with leaf degree at most $k$ by Theorem \ref{t1}.
	
	Considering the partition $V(K_l\vee(K_{n-(k+2)l}\cup(k+1)lK_1))=V(K_l)\cup V(K_{n-(k+2)l})\cup V((k+1)lK_1)$ and $V(K_l\vee(k+1)lK_1)=V(K_{l})\cup V((k+1)lK_1)$, the corresponding quotient matrices of $A(K_l\vee(K_{n-(k+2)l}\cup(k+1)lK_1))$, $A(K_l\vee(k+1)lK_1)$, $Q(K_l\vee(K_{n-(k+2)l}\cup(k+1)lK_1))$ and $Q(K_l\vee(k+1)lK_1)$ equal
	$$B_1=\begin{pmatrix}
		l-1 & n-(k+2)l& (k+1)l\\
		l & n-(k+2)l-1 & 0\\
		l & 0 & 0\\
	\end{pmatrix}\text{, } B_2=\begin{pmatrix}
		l-1 & (k+1)l\\
		l & 0 \\
	\end{pmatrix}, $$$$B_3=\begin{pmatrix}
		n+l-2 & n-(k+2)l& (k+1)l\\
		l & 2n-2kl-3l-2 & 0\\
		l & 0 & l\\
	\end{pmatrix},B_4=\begin{pmatrix}
		kl+3l-2 & (k+1)l\\
		l & l \\
	\end{pmatrix}.$$
	
	By direct computation, we derive the characteristic polynomials of the matrices $B_1$, $B_2$ and $B_3$ as $h_1(x)=x^{3}-\left(n-l k-l-2\right) x^{2}-\left(l^{2} k-l k+l^{2}-l+n-1\right) x-k^{2} l^{3}-3 k \,l^{3}+k \,l^{2} n-l^{2} k-2 l^{3}+l^{2} n-l^{2}$, $h_2(x)=x^{2}-\left(l-1\right) x-l^{2} \left(k+1\right)$, $h_3(x)=x^{3}-\left(3 n-2 l k-l-4\right) x^{2}-(4 l^{2} k+2 k l n-4 l k+4 l^{2}-l n-2 n^{2}+6 n-4) x -2 k^{2} l^{3}-4 k \,l^{3}+4 k \,l^{2} n-6 l^{2} k-2 l^{3}+4 l^{2} n-2 l \,n^{2}-6 l^{2}+6 l n-4 l$ and $h_4(x)=x^{2}-\left(k l+4 l-2\right) x+2 l^{2}-2 l$, respectively. Since the partition $V(K_l\vee(K_{n-(k+2)l}\cup(k+1)lK_1))=V(K_l)\cup V(K_{n-(k+2)l})\cup V((k+1)lK_1)$ and $V(K_l\vee(k+1)lK_1)=V(K_{l})\cup V((k+1)lK_1)$ are equitable, $\rho(K_l\vee(K_{n-(k+2)l}\cup(k+1)lK_1))$, $\rho(K_l\vee(k+1)lK_1)$, $q(K_l\vee(K_{n-(k+2)l}\cup(k+1)lK_1))$ and $q(K_l\vee(k+1)lK_1)$ are the largest root of $h_1(x)$, $h_2(x)$, $h_3(x)$ and $h_4(x)$ by Lemma \ref{Spectra}, respectively. So, (i), (ii), (iii) and (iv) are proved.
	
	By (i), (ii) and $(k,n)=(1,6l)$, $\rho(K_l\vee(K_{3l}\cup 2lK_1))$ and $\rho(K_{2l}\vee 4lK_1)$ are the largest root of $h_5(x)=x^{3}-\left(4 l-2\right) x^{2}-\left(2 l^{2}+4 l-1\right) x+6 l^{3}-2 l^{2}$ and $h_6(x)=x^{2}+\left(-2 l+1\right) x-8 l^{2}$. Then $\rho(K_{2l}\vee 4lK_1)=l-\frac12+\sqrt{9l^2-l+\frac14}$ and thus $h_5(\rho(K_{2l}\vee 4lK_1))=l^{2} \left(\sqrt{36 l^{2}-4 l+1}-8 l+5\right)>0$ for $l\in\{1,2\}$. Since $h_5'(x)=3 x^{2}-2 \left(4 l-2\right) x-2 l^{2}-4 l+1$ and $h_5'(x)\geq h_5'(\rho(K_{2l}\vee 4lK_1))>h_5'(4l-1)=14l^2-4l>0$ by $x\geq \rho(K_{2l}\vee 4lK_1)$ and $\frac{4l-2}{3}<4l-1<\rho(K_{2l}\vee 4lK_1)$, $h_5(x)$ is increasing in the interval $[\rho(K_{2l}\vee 4lK_1),+\infty)$, which implies when $x\geq \rho(K_{2l}\vee 4lK_1)$, $h_5(x)\geq h_5(\rho(K_{2l}\vee 4lK_1))>0$. Hence $\rho(K_{2l}\vee 4lK_1)>\rho(K_l\vee(K_{3l}\cup 2lK_1))$.
	
	It is easy to check that $h_5(\rho(K_{2l}\vee 4lK_1))=l^{2} \left(\sqrt{36 l^{2}-4 l+1}-8 l+5\right)<0$ for $l\geq 3$. Then $\rho(K_{2l}\vee 4lK_1)<\rho(K_l\vee(K_{3l}\cup 2lK_1))$.
\end{proof}

\subsection{Proof of Theorem \ref{ta}} 
\hspace{1.5em}Suppose that $G$ has no spanning tree with leaf degree at most $k$. Then there exists a nonempty subset $S\subseteq V(G)$ such that $i(G-S)\geq (k+1)|S|$ by Theorem \ref{t1}.

Without loss of generality, we choose such connected graph $R$ of the vertex set $V(R)=V(G)$ so that its size is as large as possible and $\delta(R)\geq t$. Then $G$ is a spanning subgraph of $R$ and $|E(G)|\leq |E(R)|$. According to the choice of $R$, we see that the induced subgraph $R[S]$ and each connected component of $R-S$ are complete graphs, and then $R= R[S]\vee(R-S)$. 

Firstly, we know that there is at most one non-trivial connected component in $R-S$. Otherwise, we can obtain a new graph ${R}'$ by adding edges among all non-trivial connected components to derive a bigger non-trivial connected component. Obviously, $R$ is a proper spanning subgraph of ${R}'$, which is a contradiction to the choice of $R$. Let $i(R-S)=i$ and $|S|=s$ for short. Then $i\geq (k+1)s$ by $i(R-S)\geq (k+1)|S|$.

We complete the proof by considering the following two cases.

{\noindent\textbf{Case 1.}} $R-S$ has exactly one non-trivial connected component. 

In this case, let the unique non-trivial of $R$ have $n_1$ vertices. connected component of $R-S$ and $V(R_1)=n_1\geq2$. Now, we show $i=(k+1)s$. 

If $i\geq (k+1)s+1$, then we take a new graph ${R}''$ obtained from $R$ by joining each vertex of $R_1$ with one vertex in $V(R-S)\backslash V(R_1)$. Then we have $|E({R}'')|=|E(R)|+n_1>|E(R)|,\text{ } \delta({R}'')\geq\delta(R)\geq t ,\text{ } i({R}''-S)=i-1\geq (k+1)s=(k+1)|S|,$ which is a contradiction with the choice of $R$. Thus $i=(k+1)s$ by $i\geq (k+1)s$, $R=K_s\vee(K_{n-(k+2)s}\cup (k+1)sK_1)$ and $\delta(R)=s\geq t$. Clearly, we have $n=(k+2)s+n_1\geq (k+2)s+2\geq (k+2)t+2$. Let $H=K_t\vee(K_{n-(k+2)t}\cup(k+1)tK_1$ and $\widehat{H}=K_\phi\vee(K_{n-(k+2)\phi}\cup(k+1)\phi K_1)$, where $\phi=\lfloor{\frac{n}{k+2}}\rfloor$.

For $n\geq (k+4)t+1+\frac{2t+2}{k+1}$, we have
$$\begin{aligned}
	&|E(H)|-|E(R)|\\=&\binom{n-(k+1)t}{2}+(k+1)t^2-\binom{n-(k+1)s}{2}-(k+1)s^2\\=&\frac12 \left(s-t\right) \left(k+1\right) \left(2 n-(k+3)s-(k+3)t-1\right)\\\geq&\frac12 \left(s-t\right) \left(k+1\right) \left(2 n-(k+3)\frac{n-2}{k+2}-(k+3)t-1\right)\\\geq&\frac{(s-t)(k+1)}{2(k+2)}((k+1)((k+4)t+1+\frac{2t+2}{k+1})-k^2t-(5t-1)k-6t+4)\\=&\frac{(s-t)(k+1)}{2(k+2)}(2k+7)\geq 0.
\end{aligned}$$

By the assumption of (i), $|E(H)|\leq|E(G)|\leq|E(R)|\leq|E(H)|$ and then $s=t$. Hence $G= R\cong H$ and $H$ has no spanning tree with degree at most $k$ by Proposition \ref{leaf degree}.

For $n\leq (k+4)t+1+\frac{2t+1}{k+1}$ and $s=t$, we have $G=R\cong H$.

For $n\leq (k+4)t+1+\frac{2t+1}{k+1}$ and $s\geq t+1$, we have $$|E(\widehat{H})|-|E(R)|=\frac12(\phi-s)(k+1)((k+3)s+(k+3)\phi+1-2n).$$
Let $n=(k+2)l+r$, where $0\leq r\leq k+1$ by the Euclidean division. Then $\phi=\lfloor{\frac{n}{k+2}}\rfloor=\lfloor{l+\frac{r}{k+2}}\rfloor=l$ and thus $n=(k+2)\phi+r$ i.e. $\phi=\frac{n-r}{k+2}$. Hence $(k+3)s+(k+3)\phi+1-2n=(k+3)s+(k+3)\frac{n-r}{k+2}+1-2n=(k+3)s-\frac{(k+1)n}{k+2}-\frac{(k+3)r}{k+2}+1\geq (k+3)s-\frac{(k+1)((k+4)t+1+\frac{2t+1}{k+1})}{k+2}-\frac{(k+3)r}{k+2}+1=(k+3)s-(k+3)t-\frac{(k+3)r}{k+2}\geq (k+3)(s-t)-\frac{(k+3)(k+1)}{k+2}=(k+3)(s-t)-k-2+\frac{1}{k+2}\geq 1+\frac{1}{k+2}>0$. Due to $\phi=\lfloor{\frac{n}{k+2}}\rfloor\geq \lfloor{\frac{(k+2)s+2}{k+2}}\rfloor=s$ and the assumption of (ii), $|E(\widehat{H})|\leq|E(G)|\leq|E(R)|\leq|E(\widehat{H})|$ and then $s=\phi$. Hence $G= R\cong \widehat{H}$ and $\widehat{H}$ has no spanning tree with degree at most $k$ by Proposition \ref{leaf degree}.

{\noindent\textbf{Case 2.}} $R-S$ has no non-trivial connected component.

In this case, we show $i\leq (k+1)s+1$. If $i\geq (k+1)s+2$, then we take a new graph ${R}'''$ obtained from $R$ by adding an edge between two vertices in $V(R-S)$. Thus $i({R}'''-S)=i-2\geq (k+1)s=(k+1)|S|$, $\delta({R}''')\geq\delta(R)\geq t$, and $|E({R}''')|=|E(R)|+1$. This contradicts with the choice of $R$, which implies $i=(k+1)s$ or $i=(k+1)s+1$.

{\noindent\textbf{Subcase 2.1.}} $i=(k+1)s$.

In this subcase, $n=(k+2)s\geq(k+2)t$, $R=K_s\vee (k+1)sK_1$ and $|E(R)|=\binom{s}{2}+(k+1)s^2$.

For $n\geq (k+4)t+1+\frac{2t+2}{k+1}$, we deduce $$\begin{aligned}
	|E(H)|-|E(R)|=&\binom{(k+2)s-(k+1)t}{2}+(k+1)t^2-\binom{s}{2}-(k+1)s^2\\=&\frac12(k+1)(s-t)((k+1)s-(k+3)t-1)\\= &\frac12(k+1)(s-t)\left(\frac{(k+1)n}{k+2}-(k+3)t-1\right)\\\geq &\frac{(k+1)(s-t)}{2(k+2)}\geq 0.
\end{aligned}$$  By the assumption of (i), $|E(H)|\leq|E(G)|\leq|E(R)|\leq|E(H)|$ and then $s=t$. Hence $n\geq (k+4)t+1+\frac{2t+2}{k+1}=(k+4)s+1+\frac{2s+2}{k+1}>(k+2)s$, a contradiction with $n=(k+2)s$.

For $n\leq (k+4)t+1+\frac{2t+1}{k+1}$, we have $R=K_s\vee (k+1)sK_1=\widehat{H}$ by $\phi=\lfloor{\frac{n}{k+2}}\rfloor=s$, and $\widehat{H}$ has no spanning tree with degree at most $k$ by Proposition \ref{leaf degree}.

{\noindent\textbf{Subcase 2.2.}} $i=(k+1)s+1$.

In this subcase, $n=(k+2)s+1\geq (k+2)t+1$, $R=K_s\vee ((k+1)s+1)K_1$ and $|E(R)|=\binom{s}{2}+s((k+1)s+1)$. 

For $n\geq (k+4)t+1+\frac{2t+2}{k+1}$, we have $$\begin{aligned}|E(H)|-|E(R)|=&\frac12(k+1)(s-t)((k+1)s-(k+3)t+1)\\=&\frac12(k+1)(s-t)\left(\frac{(k+1)(n-1)}{k+2}-(k+3)t+1\right)\\\geq &\frac{(k+1)(s-t)(k+4)}{2(k+2)}\geq 0.
\end{aligned}$$ 

By the assumption of (i), $|E(H)|\leq|E(G)|\leq|E(R)|\leq|E(H)|$ and then $s=t$. Hence $n\geq (k+4)t+1+\frac{2t+2}{k+1}=(k+4)s+1+\frac{2s+2}{k+1}>(k+2)s+1$, a contradiction with $n=(k+2)s+1$.

For $n\leq (k+4)t+1+\frac{2t+1}{k+1}$, we have $R=K_s\vee ((k+1)s+1)K_1=\widehat{H}$ by $\phi=\lfloor{\frac{n}{k+2}}\rfloor=s$, and $\widehat{H}$ has no spanning tree with degree at most $k$ by Proposition \ref{leaf degree}.

In view of Case 1 and Case 2, the proof of Theorem \ref{ta} is complete.
{\hfill $\blacksquare$ \par}

\subsection{Proof of Theorem \ref{tb}} 
\hspace{1.5em}Let $H=K_t\vee(K_{n-(k+2)t}\cup (k+1)tK_1)$, $\widetilde{H}= K_{2t}\vee4tK_1$, and $G$ be a connected graph of order $n\geq 2(k+2)t$ with $\delta(G)\geq t\geq 1$ and $\rho(G)\geq\begin{cases}\rho(H),&\text{if $(k,n)\neq(1,6t)$}\\ \rho(H),&\text{if $(k,n)=(1,6t)$ and $t\geq 3$}\\ \rho(\widetilde{H}),&\text{if $(k,n)=(1,6t)$ and $t\in \{1,2\}$} \end{cases}$. Suppose to the contrary that $G$ contains no spanning tree with leaf degree at most $k$ and then it suffices to show that $G\cong \begin{cases}H,&\text{if $(k,n)\neq(1,6t)$}\\ H,&\text{if $(k,n)=(1,6t)$ and $t\geq 3$}\\ \widetilde{H},&\text{if $(k,n)=(1,6t)$ and $t\in \{1,2\}$} \end{cases}$ by Proposition \ref{leaf degree}. By applying Theorem \ref{t1}, there exists some nonempty subset $S\subseteq V(G)$ such that $i(G-S)\geq (k+1)|S|.$ For convenience, we take $|S|=s$, then $n\geq (k+2)s$ and $s\geq \delta(G)\geq t$. Then $G^*\cong K_s\vee(K_{n-(k+2)s}\cup (k+1)sK_1)$ is the graph with the maximum size such that $G$ is a spanning subgraph of $G^*$ and $i(G^*-S)\geq(k+1)|S|$. Therefore, $\rho(G^*)\geq \rho(G)\geq \begin{cases}\rho(H),&\text{if $(k,n)\neq(1,6t)$}\\ \rho(H),&\text{if $(k,n)=(1,6t)$ and $t\geq 3$}\\ \rho(\widetilde{H}),&\text{if $(k,n)=(1,6t)$ and $t\in \{1,2\}$} \end{cases}$ by Lemma \ref{rq1}.

Next we proceed to prove Theorem \ref{tb} by considering the following two cases.

{\noindent\textbf{Case 1.}} $s=t$.

In this case, $G^*\cong H$, then $\rho(G^*)=\rho(H)$. 

If $(k,n)\neq(1,6t)$ or $(k,n)=(1,6t)$ and $t\geq 3$, then $\rho(H)=\rho(G^*)\geq \rho(G)\geq \rho(H)$ and thus $G=G^*\cong H$.

If $(k,n)=(1,6t)$ and $t\in\{1,2\}$, then $H=K_t\vee(K_{3t}\cup 2tK_1)$. By Proposition \ref{leaf degree}, we have $\rho(\widetilde{H})>\rho(H)$ and then $\rho(G^*)\geq\rho(G)\geq\rho(\widetilde{H})>\rho(H)=\rho(G^*)$, a contradiction.

{\noindent\textbf{Case 2.}} $s\geq t+1$.

By $n\geq (k+2)s$, we now consider the following two subcases.

{\noindent\textbf{Subcase 2.1.}} $n\geq(k+2)s+1$.

By Proposition \ref{leaf degree}, $\rho_1=\rho(G^*)$, $\rho_2=\rho(H)$ are the largest root of $f_1(x)=0$, $f_2(x)=0$, respectively, where $f_1(x)=x^{3}-\left(n-k s-s-2\right) x^{2}-\left(k \,s^{2}-k s+s^{2}+n-s-1\right) x-k^{2} s^{3}-3 k \,s^{3}+k \,s^{2} n-k \,s^{2}-2 s^{3}+s^{2} n-s^{2}$ and $f_2(x)=x^{3}-\left(n-k t-t-2\right) x^{2}-(k \,t^{2}-k t+t^{2}+n-t-1) x-k^{2} t^{3}-3 k \,t^{3}+k \,t^{2} n-k \,t^{2}-2 t^{3}+t^{2} n-t^{2}$. In terms of Lemma \ref{rq1}, $\rho_2=\rho(H)>\rho(K_{n-(k+1)t}\cup(k+1)tK_1)=n-(k+1)t-1$, since $K_{n-(k+1)t}\cup(k+1)tK_1$ is a spanning subgraph of $H$.

By direct calculation, we derive \begin{equation}\label{equ3}
	\begin{aligned}
		f_1(\rho_2)=f_1(\rho_2)-f_2(\rho_2)=(s-t)(k+1)f_3(\rho_2),
	\end{aligned}
\end{equation} where $f_3(x)=x^{2}+\left(1-s-t\right) x-\left(k+2\right) (s^{2}+st+t^2)+\left(n-1\right) (s+t)$. By $n\geq (k+2)s\geq(k+1)(t+1)$ and $\frac{s+t-1}{2}<n-(k+1)t-1<\rho_2$, we have \begin{equation}\label{equ4}
	\begin{aligned}
		f_3(\rho_2)>& f_3(n-(k+1)t-1)\\=&-\left(k+2\right) s^{2}-s t+k^{2} t^{2}-2 k n t+2 k \,t^{2}+k t+n^{2}-2 t n-n+t.
	\end{aligned}
\end{equation}  

Let $f_4(x)=-\left(k+2\right) x^{2}- tx+k^{2} t^{2}-2 k n t+2 k \,t^{2}+k t+n^{2}-2 t n-n+t$. Clearly, $f_4(x)$ is decreasing in the interval $[t+1, \frac {n-1}{k+2}]$. By $s\in[t+1, \frac {n-1}{k+2}]$, we have \begin{equation}\label{equ5}
	f_4(s)\geq f_4(\frac{n-1}{k+2})=\frac{1}{k+2}f_5(n),
\end{equation}
where $f_5(x)=\left(k+1\right) x^{2}-\left(2 k^{2} t+6 k t+k+5 t\right) x+k^{3} t^{2}+4 k^{2} t^{2}+k^{2} t+4 k \,t^{2}+3 k t+3 t-1$. Since $\frac{2 k^{2} t+6 k t+k+5 t}{2\left(k+1\right)}<2(k+2)t\leq n$, we have \begin{equation}\label{equ6}
	f_5(n)\geq f_5(2(k+2)t)=k^{3} t^{2}+\left(4 t^{2}-t\right) k^{2}+\left(2 t^{2}-t\right) k-4 t^{2}+3 t-1>0.
\end{equation}

By (\ref{equ3}), (\ref{equ4}), (\ref{equ5}), (\ref{equ6}) and $s\geq t+1$, we have $f_1(\rho_2)=(s-t)(k+1)f_3(\rho_2)> (s-t)(k+1)f_3((n-(k+1)t-1)=(s-t)(k+1)f_4(s)\geq (s-t)(k+1)f_4(\frac{n-1}{k+2})=\frac{(s-t)(k+1)}{k+2}f_5(n)\geq\frac{(s-t)(k+1)}{k+2}f_5(2(k+2)t)>0.$

Now we proceed to prove $f_1'(x)>0$ in the interval $[\rho_2,+\infty)$.

Since $f_1'(x)=3 x^{2}-2 \left(n-k s-s-2\right) x-k \,s^{2}+k s-s^{2}-n+s+1$ and $\frac{n-ks-s-2}{3}<n-(k+1)t-1<\rho_2$, then when $x\geq\rho_2$, $f_1'(x)\geq f_1'(\rho_2)>f_1'(n-(k+1)t_1)=-\left(k+1\right) s^{2}-\left(k+1\right) \left(2t k-2n+2t+1\right) s+3 k^{2} t^{2}+\left(-4 n t+6 t^{2}+2 t\right) k+n^{2}+\left(-4 t-1\right) n+3 t^{2}+2 t\triangleq f_6(s)$. By $\frac{-(k+1)(2kt-2n+2t+1)}{2(k+1)}=n-kt-t-\frac12>\frac{n-1}{k+2}\geq s\geq t+1$, we have $f_6(s)\geq f_6(t+1)=n^{2}+\left(\left(-2 t+2\right) k-2 t+1\right) n+\left(t^{2}-2 t\right) k^{2}+\left(t^{2}-5 t-2\right) k-3 t-2\triangleq f_7(n)$. Due to $\frac{\left(\left(2 t-2\right) k+2 t-1\right)}{2}=t+kt-k-\frac12<2(k+2)t\leq n$, we derive $f_7(n)\geq f_7(2(k+2)t)=\left(k^{2}+5 k+8\right) t^{2}+\left(2 k^{2}+5 k+1\right) t-2 k-2>0$. By the above arguments, $f_1'(x)>0$ for $x\geq \rho_2$, which implies $f_1(x)\geq f_1(\rho_2)>0$ when $x\geq \rho_2$. Hence $\rho_1=\rho(G^*)<\rho_2=\rho(H)<\rho(\widetilde{H})$, a contradiction with $\rho(G^*)\geq \rho(H)$ or $\rho(\widetilde{H})$.

{\noindent\textbf{Subcase 2.2.}} $n=(k+2)s$.

In this subcase, $G^*=K_s\vee (k+1)sK_1$ and $s=\frac{n}{k+2}\geq \frac{2(k+2)t}{k+2}=2t$. By Proposition \ref{leaf degree}, $\rho(G^*)$ is the largest root of $f_8(x)=0$, where $f_8(x)=x^{2}-\left(s-1\right) x-s^{2} \left(k+1\right)$. 

Recall that $\rho_2=\rho(H)$ is the largest root of $f_2(x)=0$. Thus we have \begin{equation}\label{equ7}
	\rho_2 f_8(\rho_2)=\rho_2 f_8(\rho_2)-f_2(\rho_2)=f_9(\rho_2),
\end{equation} where $f_9(x)=\left(\left(k+1\right) s-\left(k+1\right) t-1\right) x^{2}+(\left(k+2\right) s+t^{2}-t-k t+k \,t^{2}-k \,s^{2}-s^{2}-1) x+k^{2} t^{3}-k \,t^{2} \left(k+2\right) s+3 k \,t^{3}+k \,t^{2}-\left(k+2\right) s \,t^{2}+2 t^{3}+t^{2}$. Clearly, $\frac{\left(k+2\right) s+t^{2}-t-k t+k \,t^{2}-k \,s^{2}-s^{2}-1}{-2((k+1)s-(k+1)t+1)}<(k+2)s-(k+1)t-1$ by $s\geq 2t$, and then $f_9(x)$ is decreasing in the interval $[(k+2)s-(k+1)t-1,+\infty)$. By $\rho_2>n-(k+1)t-1=(k+2)s-(k+1)t-1$, we deduce \begin{equation}\label{equ8}\begin{aligned}
		f_9(\rho_2)>f_9((k+2)s-(k+1)t-1)=(k+1)f_{10}(s).\end{aligned}
\end{equation} where $f_{10}(x)=\left(k^{2}+3 k+2\right) s^{3}-\left(\left(3 k^{2}+9 k+7\right) t+2 k+3\right) s^{2}+(1+\left(3 k^{2}+8 k+5\right) t^{2}+\left(4 k+6\right) t) s-t (t k+1) \left(\left(k+2\right) t+1\right)$. Then we possess $f_{10}'(x)=3 (k^{2}+3 k+2) s^{2}-2 ((3 k^{2}+9 k+7) t+2 k+3)  s+1+(3 k^{2}+8 k+5) t^{2}+\left(4 k+6\right) t.$ If $x\geq 2t>\frac{(3 k^{2}+9 k+7) t+2 k+3}{3 (k^{2}+3 k+2)}$, then $f_{10}'(x)\geq f_{10}'(2t)=\left(3 k^{2}+8 k+1\right) t^{2}-\left(4 k+6\right)+1>0,$ which implies $f_{10}$ is increasing in the interval $[2t,+\infty)$.

Since $s\geq 2t$, we have $f_{10}(s)\geq f_{10}(2t)=t\left(\left(k^{2}+2 k-2\right) t^{2}+\left(-2 k-2\right) t+1\right)>0$ for $k\geq2$. For $k=1$, we admit $n=3s\geq 2(k+2)t=6t$ and then $n=6t$ or $n\geq 6t+3$. If $n\geq 6t+3$, then $s\geq 2t+1$ and thus $f_{10}(s)\geq f_{10}(2t+1)=t^{3}+8 t^{2}+8 t+2>0$. 

By (\ref{equ7}) and (\ref{equ8}), we admit $\rho_2 f_8(\rho_2)=f_9(\rho)>(k+1)f_{10}(s)>0,$ which yields $f_8(\rho_2)>0$. Clearly, $f_8(x)$ is increasing in the interval $[\rho_2, +\infty)$. Then $\rho(H)=\rho_2>\rho(G^*)\geq\rho(G)\geq\rho(H)$, a contradiction.

If $(k,n)=(1,6t)$ and $t\geq 3$, then $\rho(\widetilde{H})<\rho(H)$ by Proposition \ref{leaf degree}, which implies $\rho(\widetilde{H})=\rho(G^*)\geq\rho(G)\geq \rho(H)>\rho(\widetilde{H})$, a contradiction. If $(k,n)=(1,6t)$ and $t\in \{1,2\}$, then $\rho(\widetilde{H})=\rho(G^*)\geq\rho(G)\geq \rho(\widetilde{H})$, which implies $G=G^*\cong \widetilde{H}$.

Combining Case 1 and Case 2, the Theorem \ref{tb} is proved. {\hfill $\blacksquare$ \par}

\subsection{Proof of Theorem \ref{tc}} 
\hspace{1.5em}Let $H=K_t\vee(K_{n-(k+2)t}\cup (k+1)tK_1)$ and $G$ be a connected graph of order $n\geq 2(k+3)t+1$ with $\delta(G)\geq t\geq 1$ and $q(G)\geq q(H)$. Suppose to the contrary that $G$ contains no spanning tree with leaf degree at most $k$ and then it suffices to show that $G\cong H$ by Proposition \ref{leaf degree}. By applying Theorem \ref{t1}, there exists some nonempty subset $S\subseteq V(G)$ such that $i(G-S)\geq (k+1)|S|$. For convenience, we take $|S|=s$, then $n\geq (k+2)s$ and $s\geq \delta(G)\geq t$. Then $G^*\cong K_s\vee(K_{n-(k+2)s}\cup (k+1)sK_1)$ is the graph with the maximum size such that $G$ is a spanning subgraph of $G^*$ and $i(G^*-S)\geq(k+1)|S|$. Therefore, $q(G^*)\geq q(G)\geq q(H)$ by Lemma \ref{rq1}.

In what follows, we consider the following two cases to prove Theorem \ref{tc}.

{\noindent\textbf{Case 1.}} $s=t$.

In this case, $G^*=H$, then $q(G^*)=q(H)$ and thus $q(H)=q(G^*)\geq q(G)\geq q(H)$, which implies $G=G^*\cong H$.

{\noindent\textbf{Case 2.}} $s\geq t+1$.

Combining $n\geq (k+2)s$, it suffices to deal with the following two subcases.

{\noindent\textbf{Subcase 2.1.}} $n\geq(k+2)s+1$.

In terms of Proposition \ref{leaf degree}, $q_1=q(G^*)$ and $q_2=q(H)$ are the largest root of $g_1(x)=0$ and $g_2(x)=0$, respectively, where $g_1(x)=x^{3}-\left(3 n-2 k s-s-4\right) x^{2}-(2 k n s+4 k \,s^{2}-4 k s-2 n^{2}-n s+4 s^{2}+6 n-4) x-2 k^{2} s^{3}+4 k n \,s^{2}-4 k \,s^{3}-6 k \,s^{2}-2 n^{2} s+4 n \,s^{2}-2 s^{3}+6 n s-6 s^{2}-4 s$ and $g_2(x)=x^{3}-\left(3 n-2 k t-t-4\right) x^{2}-\left(2 k n t+4 k \,t^{2}-4 k t-2 n^{2}-n t+4 t^{2}+6 n-4\right) x-2 k^{2} t^{3}+4 k \,t^{2} n-4 k \,t^{3}-6 k \,t^{2}-2 t \,n^{2}+4 t^{2} n-2 t^{3}+6 n t-6 t^{2}-4 t$. Together with Lemma \ref{rq1} , we obtain $q=q(H)>q(K_{n-(k+1)t}\cup(kt+t)K_1)>2(n-(k+1)t-1)$, since $K_{n-(k+1)t}\cup(k+1)tK_1$ is a spanning subgraph of $H$. 

By direct calculation, we deduce \begin{equation}\label{equ12}
	\begin{aligned}g_1(q_2)=g_1(q_2)-g_2(q_2)=(s-t)g_3(q_2),\end{aligned}\end{equation} where $g_3(x)=\left(2 k+1\right) x^{2}-\left(\left(4 s+4 t+2 n-4\right) k+4 s+4 t-n\right) x-2 \left(k+1\right)^{2} s^{2}-2 (k+1)^{2} t^{2}-2 k^{2} s t+\left(\left(4 n-4 t-6\right) s+\left(4 n-6\right) t\right) k+\left(4 n-2 t-6\right) s+(4 n-6) t-2 (n-1) (n-2)$. 

By $n\geq\max\{(k+2)s+1,2(k+3)t+1\}$, we have $\frac{(4s+4t+2n-4)k+4s+4t-n}{2(2k+1)}<2(n-(k+1)t-1)$ and then $g_3(x)$ is increasing in the interval $[2(n-(k+1)t-1),+\infty)$, which implies \begin{equation}\label{equ13}
	\begin{aligned}g_3(q_2)>&g_3(2(n-(k+1)t-1))\\=&2(k+1)(-\left(k+1\right) s^{2}+\left(3 k t-2 n+3 t+1\right) s+4 k^{2} t^{2}-6 k n t+9 k \,t^{2}\\&+4 k t+2 n^{2}-7 t n+5 t^{2}-2 n+5 t).\end{aligned}\end{equation} 

Let $g_4(x)=-\left(k+1\right) x^{2}+\left(3 k t-2 n+3 t+1\right) x+4 k^{2} t^{2}-6 k n t+9 k \,t^{2}+4 k t+2 n^{2}-7 t n+5 t^{2}-2 n+5 t$. Since $\frac{(3 k t-2 n+3 t+1)}{2(k+1)}\leq t+1\leq s\leq \frac{n-1}{k+2}$, we have \begin{equation}\label{equ14}
	\begin{aligned}g_4(s)\geq g_4(\frac{n-1}{k+2})=\frac{1}{(k+2)^2}g_5(n),\end{aligned}
\end{equation} where $g_5(x)=\left(2 k^{2}+5 k+3\right) x^{2}-\left(6 k^{3} t+\left(28 t+2\right) k^{2}+\left(43 t+3\right) k+22 t\right) x+4 k^{4} t^{2}+\left(25 t^{2}+4 t\right) k^{3}+\left(57 t^{2}+18 t\right) k^{2}+\left(56 t^{2}+27 t-2\right) k+20 t^{2}+14 t-3$. Therefore, $g_5(x)$ is increasing in the interval $[2(k+3)t+1,+\infty)$ by $n\geq 2(k+3)t+1$ and so $
g_5(n)\geq g_5(2(k+3)t+1)=t((t+2) k^{3}+(7 t+16) k^{2}+(6 t+38) k-4 t+28)>0.$ 

By (\ref{equ12}), (\ref{equ13}), (\ref{equ14}) and $s\geq t+1$, we obtain $
g_1(q_2)=(s-t)g_3(q_2)>2(k+1)(s-t)g_4(s)\geq 2(k+1)(s-t)g_4(\frac{n-1}{k+2})\leq \frac{2(k+1)(s-t)}{(k+2)^2}g_5(n)=\frac{2(k+1)(s-t)}{(k+2)^2}g_5(2(k+3)t+1)>0.$

Now we show $g_1(x)$ is increasing in the interval $[q_2,+\infty)$. 

Clearly, $g_1'(x)=3 x^{2}-2 \left(3 n-2 k s-s-4\right) x-2 k n s-4 k \,s^{2}+4 k s+2 n^{2}+n s-4 s^{2}-6 n+4$. When $x\geq q_2$, we have $g_1'(x)\geq g_1'(q_2)>g_1'(2(n-(k+1)t-1))=-4\left( k+1\right) s^{2}+\left(-4\left(2k+1\right) \left(k+1\right) t+\left(6 n-4\right) k+5 n-4\right) s+12 \left(k+1\right)^{2} t^{2}-4 \left(3n-2\right) \left(k+1\right) t+2 n^{2}-2 n\triangleq g_6(s)$. Since $n\geq 2(k+3)t$ and $\frac{-4(2k+1)(k+1)t+(6n-4)k+5n-4}{8(k+1)}\geq \frac{n-1}{k+2}\geq s\geq t+1$, $g_6(s)\geq g_6(t+1)=2n^2+(6k+3-(6k+7)t)n+4(k+1)^2t^2-(8k^2+16k+8)t-8k-8\triangleq 2(2(k+3)t+1)^2+(6k+3-(6k+7)t)(2(k+3)t+1)+4(k+1)^2t^2-(8k^2+16k+8)t-8k-8=\left(6 k+34\right) t^{2}+\left(4 k^{2}+28 k+27\right) t-2 k-3>0$. Hence when $x\geq q_2$, $g_1'(x)\geq g_1'(q_2)>g_6(s)\geq g_6(t+1)>0$. 

Then we have $g_1(x)\geq g_1(q_2)>0$ when $x\geq q_2$, which implies $q(G^*)=q_1<q_2=q(H)$, a contradiction with $q(G^*)\geq q(H)$.

{\noindent\textbf{Subcase 2.2.}} $n=(k+2)s$.

In this subcase, $G^*=K_s\vee (k+1)sK_1$ and $s=\frac{n}{k+2}\geq \frac{2(k+3)t+1}{k+2}$. By Proposition \ref{leaf degree}, $q(G^*)$ is the largest root of $g_7(x)=0$, where $g_7(x)=x^{2}-\left(k s+4 s-2\right) x+2 s^{2}-2 s$. Recall that $q_2=q(H)$ is the largest root of $g_2(x)=0$ and $q_2>2(n-(k+1)t-1)=2((k+2)s-(k+1)t-1)$. Then we have \begin{equation}\label{equ19}\begin{aligned}
		q_2g_7(q_2)=&q_2g_7(q_2)-g_2(q_2)=g_8(q_2),
\end{aligned}\end{equation} where $g_8(x)=\left(2 k s-2 k t+2 s-t-2\right) x^{2}+(-2 k^{2} s^{2}+2 k^{2} s t-8 k \,s^{2}+3 t k s+4 k \,t^{2}+6 k s-4 k t-6 s^{2}-2 t s+4 t^{2}+10 s-4) x+2 k^{2} s^{2} t-4 t^{2} k^{2} s+2 k^{2} t^{3}+8 k \,s^{2} t-12 t^{2} k s+4 k \,t^{3}-6 t k s+6 k \,t^{2}+8 s^{2} t-8 s \,t^{2}+2 t^{3}-12 t s+6 t^{2}+4 t$. By $\frac{-2 k^{2} s^{2}+2 k^{2} s t-8 k \,s^{2}+3 t k s+4 k \,t^{2}+6 k s-4 k t-6 s^{2}-2 t s+4 t^{2}+10 s-4}{-2(2 k s-2 k t+2 s-t-2)}<2((k+2)s-(k+1)t-1)<q_2$, we have \begin{equation}\label{equ18}
	\begin{aligned}
		g_8(q_2)>&g_8(2((k+2)s-(k+1)t-1))\\=&4 (k+1) ((s-t) k+2 s-t-1)((k+1) s^{2}-(3 k t+4 t+1 s\\&+2 k \,t^{2}+\frac{5 t^{2}}{2}+2 t).
	\end{aligned}
\end{equation} 

Let $g_9(x)=\left(k+1\right)x^{2}-\left(3 k t+4 t+1\right) x+2 k \,t^{2}+\frac{5 t^{2}}{2}+2 t$. Clearly, $g_9(x)$ is increasing in the interval $[2t,+\infty)$ and then $g_9(s)\geq g_9(\frac{2(k+3)t+1}{k+2})=\frac{(t^{2}+2 t) k^{2}+(4 t^{2}+8 t) k-4 t^{2}-2}{2 (k+2)^{2}}>0$.

By (\ref{equ19}) and (\ref{equ18}), we have $q_2g_7(q_2)=g_8(q_2)>g_8(2((k+2)s-(k+1)t-1))=4 (k+1) ((s-t) k+2 s-t-1)g_9(s)>0$ and then $g_7(q_2)>0$. Due to $\frac{ks+4s-2}{2}<2((k+2)s-(k+1)t-1)<q_2$, $g_7(x)$ is increasing in the interval $[q_2,+\infty)$. Then when $x\geq q_2$, $g_7(x)\geq g_7(q_2)>0$ and thus $q(H)=q_2>q(G^*)\geq q(H)$, a contradiction.

Combining Case 1 and Case 2, the Theorem \ref{tc} is proved. {\hfill $\blacksquare$ \par}

\section*{Funding}
\hspace{1.5em}This work is supported by the National Natural Science Foundation of China (Grant Nos. 12371347, 12271337).

\section*{Declarations}

\textbf{Conflict of interest}\  The authors declare that they have no known competing financial interests or personal relationships that could have appeared to influence the work reported in this paper.

\vskip 0.5em

\noindent\textbf{Data availability} \  No data was used for the research described in the article.

\vspace{0.5em}

\end{document}